\newcounter{contador}
\newcounter{teoA}
\newtheorem{propo}[contador]{Proposition}
\newtheorem{teo}[contador]{Theorem}
\newtheorem{lem}[contador]{Lemma}
\theoremstyle{remark}
\newtheorem{nota}[contador]{Remark}
\newcounter{ex}
\newcommand{\C}{{\mathbb C}}
\title{A dynamic Parrondo's paradox for continuous\\ seasonal systems\footnote{The authors are supported by
Ministry of Economy, Industry and Competitiveness--State Research Agency
of the Spanish
Government through grants MTM2016-77278-P  (MINECO/AEI/FEDER, UE, first and second
authors) and DPI2016-77407-P
 (MINECO/AEI/FEDER, UE, third author). The first and second authors are also supported by the grant 2017-SGR-1617  from
AGAUR,  Generalitat de Catalunya. The third author acknowledges the
group's research recognition 2017-SGR-388 from AGAUR, Generalitat de
Catalunya.}}
\author{Anna Cima$^{(1)}$, Armengol Gasull$^{(1)}$ and V\'{\i}ctor Ma\~{n}osa$^{(2)}$
  \\*[.1truecm]
{\small \textsl{$^{(1)}$ Departament de Matem\`{a}tiques, Facultat
de Ci\`{e}ncies,}}
\\*[-.25truecm] {\small \textsl{Universitat Aut\`{o}noma de Barcelona,}}
\\*[-.25truecm] {\small \textsl{08193 Bellaterra, Barcelona, Spain}}
\\*[-.25truecm] {\small \textsl{cima@mat.uab.cat,
gasull@mat.uab.cat}}\\
\\*[-.25truecm] {\small \textsl{$^{(2)}$ Departament de Matem\`{a}tiques,}}
\\*[-.25truecm] {\small \textsl{Universitat Polit\`{e}cnica de Catalunya}}
\\*[-.25truecm] {\small \textsl{Colom 11, 08222 Terrassa, Spain}}
\\*[-.25truecm] {\small \textsl{victor.manosa@upc.edu}}}
\begin{document}

% ********************** EN CAS D'ARTICLE *********************
\maketitle
\begin{abstract}  We show that planar continuous alternating systems, which can be used to model
 systems with seasonality, can exhibit a type of Parrondo's dynamic paradox, in which the
  stability of an equilibrium, common to all seasons is reversed for the global seasonal system.
\end{abstract}
% *************************************************************

\noindent {\sl  Mathematics Subject Classification 2010:}
Primary: 37C75; 34D20. Secondary: 37C25.

\noindent {\sl Keywords:}  Continuous dynamical systems with
seasonality, non-hyperbolic critical points, local asymptotic
stability, Parrondo's dynamic paradox.

\newpage

\section{Introduction and Main results}

For dynamical systems given by differential equations, alternating systems take the form
\begin{equation}\label{e:eq1a}
\begin{cases}\begin{array}{l}
\dot {x_1}(t)=X_1(\mathbf{x}(t))\,\mbox{ for } t \,\mbox{ such that
} t \,(\mathrm{mod}\, T)\,
\in [0,T_1), \\
\dot{{x_2}}(t)=X_2(\mathbf{x}(t))\,\mbox{ for } t \,\mbox{ such that
} t \,(\mathrm{mod}\, T)\, \in [T_1,T_1+T_2),\\
\vdots \\
\dot{{x_n}}(t)=X_n(\mathbf{x}(t))\,\mbox{ for } t \,\mbox{ such that
} t \,(\mathrm{mod}\, T)\, \in [T_1+\cdots +T_{n-1},T_1+\cdots
+T_n),
\end{array}\end{cases}
\end{equation}
where $T=\sum_{j=1}^{n}T_j$, with $T_j>0$ for $j=1,2,\ldots,n,$ and $X_j$  being class
$\mathcal{C}^1$ vector fields. They can be used to model continuous
seasonal systems with $n$ seasons of durations $T_1, T_2,\ldots
,T_n$. It is not necessary to recall the importance of these kind of
systems in mathematical biology, for instance in population models
for which the seasonality has an effect in  the reproduction and
mortality rates due to environmental circumstances or to  human
intervention like harvesting, see \cite{HZ,X,XBD} and references
therein  (and \cite{CHL14,F,KS,Liz17} for discrete examples); or
also in epidemiological models with periodic contact rate, see
\cite{BCO} and the references therein.

The so called \emph{Parrondo's paradox} is a paradox in game theory,
that in a few words says that {\it a combination of losing strategies
  can  become a winning strategy,}  see~\cite{HA,Par}. Several  dynamical versions of related paradoxes are presented
in \cite{CLP,CGM12,CGM13,CGM18} for discrete non-autonomous
dynamical systems. In the first paper  the authors combine
periodically one-dimensional maps $f_1$ and $f_2$  to  give rise to
chaos or order. The existence of discrete systems that exhibit
(numerically) chaotic dynamics by alternating regular, or more
precisely, integrable systems, has been referred in \cite{CGM12} and
\cite{CGM13}. In this last reference also are shown
 alternating systems with regular (integrable) dynamics obtained by alternating an integrable map and a numerically chaotic one. In
 \cite{CGM18} we  study a local problem, but in any dimension.  In particular, we
relate the stability of a common fixed point of two planar maps,
$F_1$ and $F_2,$ with the stability of this point for $F_2\circ
F_1.$ We prove that in the non-hyperbolic case, with complex
conjugated eigenvalues (elliptic fixed points), a common attracting
character of the common fixed point of $F_1$ and $F_2$, can be
reversed for $F_2\circ F_1.$ This phenomenon is the one that we
named  \emph{ Parrondo's dynamic
    type paradox} for 2-periodic discrete dynamical systems. In this
    work we will show that a similar dynamical paradox appears for continuous seasonal systems.

As noted in \cite{BCO}, the asymptotic  stability of the equilibria
of a  seasonal system, for instance the disease-free equilibrium of
an epidemiological model, is a more complex  issue than in the
autonomous case. In this note we evidence that a seasonal system of
type \eqref{e:eq1a}  can exhibit a dynamic-type Parrondo's paradox,
in which the stability of an equilibrium common to all stations
(either locally asymptotically stable, LAS from now on, or a
repeller), is reversed for the seasonal system \eqref{e:eq1a}. That
is, we show that there exist systems \eqref{e:eq1a} with a common
singular point which is LAS (resp. repeller) for each season system
$\dot{\mathbf{x}}=X_i(\mathbf{x})$ for $i=1,\ldots,n$ and such it is
a repeller (resp. LAS) for the global seasonal system.

To simplify the problem, we prove the existence of the
Parrondo's-type paradox  for planar differential with two seasons,
both with duration $T_1=T_2=1$. Hence  systems of the form
\begin{equation}\label{e:eq1}
\begin{cases}\begin{array}{l}
\dot{\mathbf{x}}(t)=X_1(\mathbf{x}(t))\,\mbox{ for } t\in [2k,2k+1), \\
\dot{\mathbf{x}}(t)=X_2(\mathbf{x}(t))\,\mbox{ for } t\in
[2k+1,2k+2),\, k\in\mathbb{N}\cup\{0\},
\end{array}\end{cases}
\end{equation}
with $\mathbf{x}(t)\in\mathbb{R}^2$. Our main result is:

\begin{teo}\label{t:main}
There exist planar polynomial vector fields $X_1$ and $X_2$ sharing
a  common singular point which is LAS (resp. repeller) for both of
their associated differential systems, and such that it is a
repeller (resp. LAS) for the $2$-seasonal differential system
\eqref{e:eq1}.
\end{teo}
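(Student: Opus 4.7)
The plan is to reduce the stability question for the seasonal system \eqref{e:eq1} to a discrete problem, and then to invoke (an analogue of) the $2$-periodic discrete paradox of \cite{CGM18}. Writing $\phi^{X_i}_t$ for the flow of $X_i$ and $F_i:=\phi^{X_i}_1$, the stability of a common singular point $p$ of $X_1,X_2$ for \eqref{e:eq1} is equivalent to the stability of $p$ as a fixed point of the period-$2$ map $F:=F_2\circ F_1$; likewise, since $F_i^{\,n}=\phi^{X_i}_n$, the stability of $p$ for $X_i$ coincides with its stability as a fixed point of $F_i$. Thus the result amounts to producing polynomial $X_1,X_2$ whose time-$1$ flows share an elliptic fixed point at $p$ whose common stability character is reversed upon composition.

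For such a reversal to be possible $p$ must be non-hyperbolic for each $X_i$: if $DX_i(p)$ had both eigenvalues in a common open half-plane then $DF(p)=e^{DX_2(p)}e^{DX_1(p)}$ would inherit the same attracting or repelling character and no sign flip could occur. I would therefore place $p$ at the origin, take $DX_i(0)=\bigl(\begin{smallmatrix} 0 & -\omega_i \\ \omega_i & 0\end{smallmatrix}\bigr)$ with $\omega_i\neq 0$, and let the higher-order terms of $X_i$ be polynomials with free coefficients. Under these choices the origin is a weak focus of each $X_i$ and its stability is controlled by the first non-vanishing Lyapunov (focus) quantity $V_i$. The composition $F$ has $DF(0)=R_{\omega_1+\omega_2}$; under a mild non-resonance condition on $\omega_1+\omega_2$ (to avoid low-order strong resonances that would obstruct the normal form), the stability of the origin for $F$ is governed by the first discrete focus quantity $W$ of the composition.

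The core calculation is then to show that $W$ can have sign opposite to $V_1$ and $V_2$. The point, exactly as in the discrete setting of \cite{CGM18}, is that $W$ is \emph{not} simply $V_1+V_2$: expanding each $F_i$ via the Lie series of $X_i$ to order three, composing, and reducing to normal form, one finds that $W$ picks up bilinear cross terms in the nonlinear coefficients of $X_1$ and $X_2$ weighted by trigonometric factors in $\omega_1,\omega_2$. These cross terms provide enough extra flexibility that, for suitable choices of the free coefficients, $V_1$ and $V_2$ can be made of the same sign while $W$ has the opposite sign, and the symmetric construction $W\leftrightarrow -W$ yields the second half of the statement.

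The main obstacle is the actual computation of $W$ as an explicit polynomial in the coefficients of $X_1,X_2$ and of $\omega_1,\omega_2$: the Lie-series expansion of $F_2\circ F_1$ to order three and its subsequent reduction to normal form are mechanical but produce sizable intermediate expressions, and are best handled with a computer algebra system. Once $V_1,V_2,W$ are written explicitly, the proof is completed by exhibiting concrete rational (or otherwise simple) values of $\omega_1,\omega_2$ and of the nonlinear coefficients (for instance with $X_1,X_2$ of degree three) realizing both prescribed sign patterns, thereby producing the two polynomial vector fields claimed by the theorem.
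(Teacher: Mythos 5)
Your reduction is the one the paper uses: the stability of the common equilibrium for the seasonal system \eqref{e:eq1} is that of the origin for the period map $F_2\circ F_1$ with $F_i=\phi^{X_i}_1$, the point must be non-hyperbolic and elliptic, and everything is decided by first Birkhoff/Lyapunov stability quantities. Where you diverge is in how the example is produced, and that is where your argument has a genuine gap. You propose the forward route: take $X_1,X_2$ with rotation linear parts and free cubic coefficients, expand the time-$1$ maps by Lie series, compute the first stability constant $W$ of the composition, and then ``exhibit'' coefficients making $V_1,V_2<0<W$. The existence of such coefficients is precisely the content of the theorem, and you justify it only by analogy with the discrete paradox of \cite{CGM18}. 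That analogy does not transfer automatically: the discrete result concerns arbitrary polynomial maps of the form \eqref{e:F}, whereas the maps you can reach are the constrained subfamily of time-$1$ flows of polynomial vector fields. The paper's own Theorem \ref{p:minimal}(ii) shows this constraint is real --- for instance $\mathrm{e}^{i\alpha}z+\bar z^{\,n}$ with $\alpha=2\pi/(n+1)$ is not, even to order $n$, the time-$1$ map of any smooth vector field --- so one must check that the sign reversal survives the restriction. Asserting that the cross terms ``provide enough flexibility'' does not do this.

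The paper closes the gap by going in the inverse direction. It takes the two explicit cubic maps $F_1,F_2$ of Proposition \ref{p:maps} (Example~7 of \cite{CGM18}), for which $V_1(F_1)=V_1(F_2)=-1/2$ and $V_1(F_2\circ F_1)=\tfrac12(3\sqrt{3}-5)>0$ are already known, and solves the realization problem: it produces explicit cubic vector fields whose time-$1$ flows equal $F_1,F_2$ up to $O(4)$ (Proposition \ref{p:laslasrep}). Since $F_1$ has $\alpha=\pi/2$, a resonant rotation number, this requires verifying the compatibility condition \eqref{e:compatibilitatquartica}, which happens to hold for this particular $F_1$. The final ingredient, which your sketch implicitly needs but does not state, is that the first Birkhoff stability constant depends only on the $3$-jet of the map, so matching the maps only up to $O(4)$ suffices; the repelling case then follows by replacing $X_1,X_2$ with $-X_1,-X_2$. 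To salvage your forward approach you would have to actually carry out the Lie-series computation of $W$ and display coefficients with the required signs; at that point you would essentially be recomputing what the paper imports from \cite{CGM18} together with its Section~2.
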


Notice that this theoretical result opens a practical interesting
situation.    Let us consider a system where the state variables
represent the density of individuals of an age-structured population
of a species that can be potentially dangerous to humans, like for
instance mosquitoes, \cite{JS}. Let us assume that for two different
environmental situations (the two seasons) the zero solution is a
repeller. Of course, this corresponds to unwanted scenarios since,
in each season, for an arbitrary small initial density of
individuals the amount of them increases over time. Then, it might
happen that alternating both situations we get a system with the
origin as a LAS critical point, implying the population decline (and
long term extinction) of the dangerous species.

In the following, we will use complex notation in order to simplify the expressions.
Hence instead of taking   planar vector fields $U(x,y)\partial/\partial x+V(x,y)\partial/\partial y$ with $(x,y)\in \mathbb{R}^2$, we will consider
the same vector fields but in complex notation
$X(z,\bar{z})=F(z,\bar z)\partial/\partial z$ where $z=x+iy\in\C$ , with associated differential equation $\dot{z}=X(z,\bar{z})$.

One of the key ingredients in our approach will be to know whether for a given  local polynomial
diffeomorphism   of the form
\begin{equation}\label{e:F}
F(z,\bar{z})=\mathrm{e}^{i\alpha}z+\sum\limits_{j+k=2}^{n} f_{j,k}z^j\bar{z}^k, \quad \alpha\in(0,2\pi),
\end{equation}
of degree at most $n,$ that has  a non-hyperbolic elliptic fixed
point at the origin, there exists of  a polynomial vector field of
type
\begin{equation}\label{e:X}
X(z,\bar{z})=i\alpha z+\sum_{j+k=2}^{n} a_{j,k} z^j\bar{z}^k
\end{equation}
and such that its associated flow $\varphi(t;z,\bar{z})$ satisfies
\begin{equation}\label{e:temps1}
\varphi(1;z,\bar{z})=F(z,\bar{z})+O(n+1),
\end{equation}
for every $(z,\bar z),$ for $z$ in a small enough  neighborhood of
$z=0.$ As we will see, for our purposes we only will need to
consider the cases $n=2$ or  $n=3.$ This question is solved in next
section.

We also would like to comment that very few planar  polynomial maps
are exactly a flow at a fixed time, {\it i.e.} the remainder term
$O(n+1)$ in~\eqref{e:temps1} is identically zero. They are the
so-called {\it polynomial flows}, and the normal forms of their
corresponding vector fields are given in \cite[Thm. 4.3]{BM}.

In fact, ultimately, the proof of Theorem \ref{t:main} relies on the
fact that, near a critical point, the flow of some suitable vector
fields are such, up to certain  fixed order on the initial
conditions, their associated time-$1$ maps are the ones given in
Example 7 of \cite{CGM18}. We recall them in Proposition
\ref{p:maps}. These maps display the features of the Parrondo's
dynamic paradox for the dynamics induced by iterating maps and this
fact translates to alternating systems of differential equations.
This proof   is given in Section~\ref{s:proof}.

As a byproduct of our study we obtain the following result that we believe is interesting by itself. Its proof is given in Section~\ref{t:2}.

\begin{teo}\label{p:minimal} It holds:
\begin{enumerate}[(i)]

\item  Consider a local diffemorphism of the form~\eqref{e:F}, where $\mathrm{e}^{i\alpha}$ is not a
    root of the  unity. Then, for any $n\geq 2$  there is a unique
    polynomial vector field of the form~\eqref{e:X} and degree at most $n$  such that its flow satisfies Equation~\eqref{e:temps1}.

\item   For any $n\geq 2$, there exists a map $F$ of
 the form~\eqref{e:F} with $\alpha=2\,\pi/(n+1)$ for which there is no $\mathcal{C}^{n+1}$ vector field
  whose flow satisfies Equation \eqref{e:temps1}.
\end{enumerate}
\end{teo}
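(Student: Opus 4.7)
The common strategy is to compare the Taylor expansions of the time-$1$ flow of $X$ and of $F$ at the origin, order by order. Writing $X(z,\bar z) = i\alpha z + \sum_{m=2}^{n} P_m(z,\bar z)$ with $P_m = \sum_{j+k=m} a_{j,k}\,z^j \bar z^k$ homogeneous of degree $m$, variation of parameters gives
\begin{equation*}
\varphi(t;z,\bar z) = \mathrm{e}^{i\alpha t}\, z + \int_0^t \mathrm{e}^{i\alpha(t-s)}\, \sum_{m\geq 2} P_m\bigl(\varphi(s;z,\bar z),\bar\varphi(s;z,\bar z)\bigr)\, ds,
\end{equation*}
and Picard iteration then shows that, for each $m\geq 2$ and each $(j,k)$ with $j+k=m$, the coefficient of $z^j\bar z^k$ in $\varphi(1;z,\bar z)$ has the form
\begin{equation*}
\mathrm{e}^{i\alpha}\,I_{j,k}\,a_{j,k} \;+\; L_{j,k}\bigl(\{a_{j',k'}\}_{j'+k'<m}\bigr),
\end{equation*}
where $I_{j,k} := \int_0^1 \mathrm{e}^{i\alpha s(j-k-1)}\,ds$ and $L_{j,k}$ is an explicit polynomial in the lower-order coefficients which vanishes when all of them vanish. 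Matching $\varphi(1;z,\bar z)$ with $F$ then reduces to the triangular system $f_{j,k} = \mathrm{e}^{i\alpha}\,I_{j,k}\,a_{j,k} + L_{j,k}$, solvable inductively on $m$ exactly when the required $I_{j,k}$ are nonzero.

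For part (i), when $\mathrm{e}^{i\alpha}$ is not a root of unity we have $I_{j,k} = 1$ if $j = k+1$ and otherwise $I_{j,k} = (\mathrm{e}^{i\alpha(j-k-1)} - 1)/(i\alpha(j-k-1))$, which is nonzero because $\mathrm{e}^{i\alpha(j-k-1)} \neq 1$ for every nonzero integer $j-k-1$. The induction succeeds at every order and produces the unique polynomial vector field of the form~\eqref{e:X} of degree at most $n$.

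For part (ii), take $\alpha = 2\pi/(n+1)$ and
\[
F(z,\bar z) = \mathrm{e}^{i\alpha}\, z + \bar z^n,
\]
so that $f_{j,k} = 0$ for $2 \le j+k \le n-1$ while $f_{0,n} = 1$. Any $\mathcal{C}^{n+1}$ vector field $X$ satisfying~\eqref{e:temps1} must have linear part $i\alpha' z$ with $\mathrm{e}^{i\alpha'} = \mathrm{e}^{i\alpha}$; a short check shows that, whatever the choice of $\alpha'$, one has $I_{j,k} = 0$ precisely when $j-k-1$ is a nonzero multiple of $n+1$. For $(j,k)$ with $2 \le j+k \le n-1$ the inequality $|j-k-1| \le n < n+1$ rules out such a resonance, so by the inductive matching argument of part (i) we obtain $a_{j,k} = 0$ for all these indices. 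At order $n$ and $(j,k)=(0,n)$ one has $j-k-1 = -(n+1)$, so $I_{0,n} = 0$, and the vanishing of all lower-order $a_{j',k'}$ forces $L_{0,n} = 0$ as well. The matching equation at $(0,n)$ thus collapses to $f_{0,n} = 0$, contradicting $f_{0,n} = 1$.

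The main technical step underlying both parts is the derivation of the structural formula $\mathrm{e}^{i\alpha}\,I_{j,k}\,a_{j,k} + L_{j,k}$ for the order-$m$ coefficient of $\varphi(1;\cdot)$: it isolates the arithmetic obstruction $I_{j,k}$ as the unique factor multiplying $a_{j,k}$ and confines every other contribution to the polynomial $L_{j,k}$ depending on strictly lower-order data. Once this is established, both statements reduce to the elementary arithmetic deciding when $I_{j,k}$ vanishes.
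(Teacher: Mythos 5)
Your proof is correct and follows essentially the same route as the paper: the order-by-order resonance analysis isolating the factor $I_{j,k}=\int_0^1 \mathrm{e}^{i\alpha(j-k-1)s}\,ds$ is the paper's recursive procedure and Theorem~3 (statement $(i)$ gives part $(i)$), and for part $(ii)$ you use the same counterexample $F(z,\bar z)=\mathrm{e}^{i\alpha}z+\bar z^{n}$, the same induction showing all coefficients of order below $n$ vanish, and the same contradiction at $(j,k)=(0,n)$. The only cosmetic difference is that you treat all $n\ge 2$ uniformly where the paper dispatches $n=2,3$ via its explicit propositions.
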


Notice that the above result implies the existence of planar
polynomial local diffeomorphisms, preserving orientation, that can
not be given as the flow at a fixed time of  smooth planar vector
fields.  Particular examples of such maps are given in
\eqref{e:Fcontaexemple}.

\section{Vector fields with prescribed maps as time-$1$ map}

\subsection{A recurrent procedure}\label{s:recu}

First we establish the structure equation that must satisfy the
first terms of a flow map associated with a vector field.  It is
easy to prove that if a flow map satisfies Equations
\eqref{e:F}--\eqref{e:temps1}, then the vector field must have the
form $X(z,\bar{z})= i \alpha z+O(2),$ so we must work with vector
fields with this fixed linear part. If we impose that $X$ is
polynomial of degree~$n$ we can write $X(z,\bar{z})=i\alpha
z+\sum_{j+k=2}^{n} a_{j,k} z^j\bar{z}^k.$ When we only assume that
it is of class $\mathcal{C}^{n+1},$ near the origin we can write it
as $X(z,\bar{z})=i\alpha z+\sum_{j+k=2}^{n} a_{j,k}
z^j\bar{z}^k+O(n+1).$ In any case, by plugging the Taylor expansion
of $\varphi(t;z,\bar{z})$ in the expression of the differential
system $\dot{z}=X(z,\bar{z})$, that is by imposing
$d\varphi(t;z,\bar{z})/dt=X(\varphi(t),\overline{\varphi(t)})=i\alpha
\varphi(t;z,\bar{z})+\sum_{j+k=2}^{n} a_{j,k}
\varphi^j(t,z)\bar{\varphi}^k(t,z)+O(n+1),$ and from  a power
comparison argument we get the following result:

\begin{lem}\label{l:lemequaciolineal}
Let $X(z,\bar{z})=i\alpha z+\sum_{j+k=2}^{n} a_{j,k}
z^j\bar{z}^k+O(n+1)$ with $\alpha\in(0,2\pi)$ be
 a planar $\mathcal{C}^{n+1}$ vector field. Then, in a neighborhood of the origin, its flow is given by
$ \varphi(t;z,\bar{z})=\mathrm{e}^{i\alpha t}z+ \sum_{j+k= 2}^n
\varphi_{j,k}(t)z^j\bar{z}^k+O(n+1),$ where for each $j,k\in
\mathbb{N}$ such that $2\leq j+k\leq n$ the functions
$\varphi_{j,k}(t)$ satisfy the linear differential equation
\begin{equation}\label{e:edolineal}
\dot{\varphi}_{j,k}(t)=i\alpha\, \varphi_{j,k}(t)+a_{j,k}
\mathrm{e}^{i(j-k)\alpha t}+b_{j,k}(t)\,\,\,\text{with}\,\,\,
\varphi_{j,k}(0)=0,
\end{equation}
where $b_{j,k}(t)=\sum_{\gamma\in S_{j,k}} P_\gamma(t)
\mathrm{e}^{\gamma i t}$ and $S_{j,k}\subset \mathbb{Z}$ is a finite
set,  $P_\gamma$ depends on the values on the coefficients
$a_{\ell,m}$ and the functions $\varphi_{\ell,m}(t)$ with $2\leq
\ell+m<j+k$.
\end{lem}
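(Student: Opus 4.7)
The plan is to insert the Taylor ansatz
\[
\varphi(t;z,\bar z)=\mathrm{e}^{i\alpha t}z+\sum_{j+k=2}^{n}\varphi_{j,k}(t)\,z^j\bar z^k+O(n+1)
\]
into the differential equation $\dot\varphi=X(\varphi,\bar\varphi)$ and to match the coefficient of $z^j\bar z^k$ on both sides for every pair with $2\le j+k\le n$. First I would justify the ansatz: since $X$ is $\mathcal C^{n+1}$ and $0$ is a fixed point, the flow is jointly $\mathcal C^{n+1}$ in $(t,z,\bar z)$ near the origin, so it admits a unique Taylor expansion of this shape with $\mathcal C^{n+1}$ remainder. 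The linear part comes from solving $\dot A=i\alpha A$ with $A(0)=z$, which also explains the absence of a $\bar z$ monomial at degree one. The initial condition $\varphi(0;z,\bar z)=z$ immediately gives $\varphi_{j,k}(0)=0$ for every $j+k\ge 2$.

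Next, I would write the right-hand side as $i\alpha\varphi+\sum_{\ell+m=2}^{n}a_{\ell,m}\varphi^\ell\bar\varphi^m+O(n+1)$ and expand each monomial $\varphi^\ell\bar\varphi^m$ via the multinomial formula, collecting the contributions to the coefficient of $z^j\bar z^k$. The linear term contributes exactly $i\alpha\,\varphi_{j,k}(t)$. The monomial with $(\ell,m)=(j,k)$ contributes, at its minimal degree $j+k$, only the leading piece $a_{j,k}(\mathrm{e}^{i\alpha t}z)^j(\mathrm{e}^{-i\alpha t}\bar z)^k=a_{j,k}\mathrm{e}^{i(j-k)\alpha t}z^j\bar z^k$, which is exactly the source of the explicit forcing term $a_{j,k}\mathrm{e}^{i(j-k)\alpha t}$ in \eqref{e:edolineal}.

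All remaining contributions to degree $j+k$ come from monomials $a_{\ell,m}\varphi^\ell\bar\varphi^m$ with $\ell+m<j+k$ in which at least one leading factor $\mathrm{e}^{\pm i\alpha t}z$ or $\mathrm{e}^{\mp i\alpha t}\bar z$ is replaced by a higher-order factor $\varphi_{j',k'}(t)z^{j'}\bar z^{k'}$ or $\overline{\varphi_{j',k'}(t)}\,\bar z^{j'}z^{k'}$ with $j'+k'\ge 2$, in such a way that the total monomial degree equals exactly $j+k$. Collecting them produces $b_{j,k}(t)$, which is polynomial in the coefficients $a_{\ell,m}$ with $\ell+m<j+k$ and in the functions $\varphi_{\ell,m}(t),\overline{\varphi_{\ell,m}(t)}$ with $\ell+m<j+k$, multiplied by exponentials of the form $\mathrm{e}^{i\nu\alpha t}$ with $\nu\in\mathbb Z$ arising from the remaining leading factors. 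Hence the equations~\eqref{e:edolineal} decouple into a triangular system that can be integrated recursively in $j+k$.

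Finally, the stated shape $b_{j,k}(t)=\sum_{\gamma\in S_{j,k}}P_\gamma(t)\mathrm{e}^{\gamma i t}$ with $S_{j,k}$ a finite set of integer multiples of $\alpha$ follows by induction on $j+k$: the base case $j+k=2$ is a linear ODE whose forcing is a pure exponential, so its solution is a finite sum of exponentials (with a factor of $t$ in the resonant subcase $j-k=1$); and the class of ``polynomial in $t$ times finite sum of exponentials with frequencies in $\alpha\,\mathbb Z$'' is closed under sums, products, complex conjugation and solution of $\dot u=i\alpha u+(\text{such a forcing})$. The main bookkeeping obstacle is verifying that every substitution producing a degree-$(j+k)$ monomial in the multinomial expansion necessarily involves only functions $\varphi_{\ell,m}$ with $\ell+m<j+k$; this is guaranteed by the constraint $\ell+m<j+k$ together with the fact that each replacement of a linear factor by a $\varphi_{j',k'}$-factor raises the monomial degree by $(j'+k')-1\ge 1$.
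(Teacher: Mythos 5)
Your proposal is correct and follows essentially the same route as the paper, which proves this lemma only by the brief remark that one plugs the Taylor expansion of $\varphi$ into $\dot z=X(z,\bar z)$ and compares powers; your write-up simply makes that power-comparison argument explicit (triangular structure in $j+k$, identification of the $a_{j,k}\mathrm{e}^{i(j-k)\alpha t}$ term, and closure of the class of exponential-polynomial forcings). The only nitpick is the parenthetical about a resonant $t$-factor already at the base case $j+k=2$: by parity $j-k=1$ cannot occur there, so that phenomenon first appears at odd orders such as $(j,k)=(2,1)$; this does not affect the argument.
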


By using the above result, given a  map \eqref{e:F}, we want either
to obtain a planar polynomial vector field  $X(z,\bar{z})$
 such
that in a neighborhood of the origin its flow satisfies
\eqref{e:temps1} or to prove that there is no $\mathcal{C}^{n+1}$
vector field which flow satisfies \eqref{e:temps1}. We do it by a
recursive procedure.

Indeed, suppose that we have computed the coefficients of $X$ up to
order $\kappa-1$ for $2< \kappa\leq n$. To compute any coefficient
$a_{j,k}$ with $j+k=\kappa$, we solve the initial value problem
\eqref{e:edolineal} and impose Equation \eqref{e:temps1}.  If
$j-k-1= 0$, then
%\begin{equation}\label{e:EE}
$$
\varphi_{j,k}(1)=\mathrm{e}^{i\alpha }\left[ a_{j,k}+\int_0^1 b_{j,k}(\tau)\mathrm{e}^{-i\alpha \tau}d\tau \right]=f_{j,k}.
$$
%\end{equation}
In this case we can isolate the coefficient $a_{j,k}$, thus contributing to determinate the expression of the vector field.

If $j-k-1\neq 0$, then we have
\begin{equation}\label{e:E}
\varphi_{j,k}(1)=\mathrm{e}^{i\alpha }\left[\frac{a_{j,k}}{i(j-k-1)\alpha}\left(\mathrm{e}^{i(j-k-1)\alpha }
-1\right)+\int_0^1 b_{j,k}(\tau)\mathrm{e}^{-i\alpha \tau}d\tau \right]=f_{j,k}.
\end{equation}
From the above equation we always can isolate the coefficient $a_{j,k}$ except in the case that
$$
\mathrm{e}^{i(j-k-1)\alpha }-1=0,
$$ or, in other words if  $\mathrm{e}^{i\alpha}$ is a $|j-k-1|$-root of the unity.
 In this case we say that there appears a \emph{resonance} associated with the coefficient
  $a_{j,k}$, and the equation \eqref{e:E} is satisfied for every value of $a_{j,k}$
   (thus leading to a parametric family of vector fields) if and only if it is satisfied
   \emph{the compatibility equation} corresponding to the coefficient $a_{j,k}$:
\begin{equation}\label{e:compatibilitat}
\mathrm{e}^{i\alpha}\,\int_0^1 b_{j,k}(\tau)\mathrm{e}^{-i\alpha \tau}d\tau=f_{j,k}.
\end{equation}
Otherwise, we get an obstruction for $F$ to be the time-$1$  map of
a polynomial  (or $\mathcal{C}^{n+1}$)  vector field, see the proof
of  Theorem \ref{p:minimal} for examples of  polynomial  maps for
which there is no vector field whose flow  satisfies
\eqref{e:temps1}.

In fact, observe that if for any couple $\ell$ and $m$ with $2\leq
\ell+m<j+k$ there is not a resonance, then the function
$b_{j,k}(t)=\sum_{\gamma\in S_{j,k}} P_\gamma(t) \mathrm{e}^{\gamma
i t}$ introduced in Lemma \ref{l:lemequaciolineal}, depends on the
values of the previous coefficients $a_{\ell,m}$, thus on the
previous coefficients $f_{\ell,m}$.   On the contrary, if there
exists a couple of values $\ell$ and $m$ with $2\leq \ell+m<j+k$
giving rise to a resonance (that is, $\mathrm{e}^{i\alpha}$ is a
$|\ell-m-1|$-root of the unity) and the compatibility condition
associated with $a_{\ell,m}$ is satisfied, then the function
$b_{j,k}(t)$ also depends on the parameter $a_{\ell,m}$.

Also observe that a resonance may appear at different order levels,
so that in order to  obtain the associated vector field, we must
identify the first order in which a resonance appears and verify
that  each compatibility equation is fulfilled. In that case, we
can proceed by solving the different equations \eqref{e:edolineal}
for higher orders, by carrying the expressions of the indeterminate
terms, and verifying that the next different compatibility equations
are also satisfied.
\begin{nota}\label{resnivelln}
Fixing an order $n$, if we consider the pairs $(j,k)$ with $j+k=n$
we get that $|j-k-1|\in\{0,2,4,\ldots,n+1\}$ if $n$ is odd and
$|j-k-1|\in\{1,3,5,\ldots,n+1\}$ if $n$ is even. Hence, if a
resonance appears at order $n$ and it has not appeared at order
$k<n,$ then $\mathrm{e}^{i\alpha}$ is an $m$-root of unity with
$m\in\{2,4,\ldots,n+1\}\,\,\text{if}\,\,n\,\,\text{is odd}$ and
$m\in\{3,5,\ldots,n+1\}\,\,\text{if}\,\,n\,\,\text{is even}.$
\end{nota}

Summarizing the above recursive procedure we obtain the following
result:

\begin{teo}\label{p:compatibilitat}
Consider a polynomial map $F$ of degree   $n$ of the form
\eqref{e:F}.
\begin{itemize}[(i)]
\item If $\mathrm{e}^{i\alpha}$ is not a $|j-k-1|$-root of the unity
for all couple $j,k$ with $j+k\in\{0,1,\dots ,n\}$ then there exists
a unique polynomial vector field of degree at most $n$ such that its
associated flow  satisfies
$\varphi(1;z,\bar{z})=F(z,\bar{z})+O(n+1).$

\item If $\mathrm{e}^{i\alpha}$ is a $|j-k-1|$-root of the unity
for certain $j,k$ with $j+k\in\{0,1,\dots ,n\}$ and the
compatibility equation \eqref{e:compatibilitat} corresponding to the
coefficient $a_{j,k}$ is not satisfied, then there is no
$\mathcal{C}^{n+1}$  vector fiel such that its associated flow
satisfies $\varphi(1;z,\bar{z})=F(z,\bar{z})+O(n+1).$

\item If there are $\ell$
couples $j,k$ with $j+k\in\{0,1,\dots ,n\}$ such that
$\mathrm{e}^{i\alpha}$ is a $|j-k-1|$-root of the unity and the
compatibility equations \eqref{e:compatibilitat} corresponding to
the coefficients $a_{j,k}$ are satisfied, then there exists an
$\ell$-parametric family of polynomial vector fields of degree at
most $n$ satisfying $\varphi(1;z,\bar{z})=F(z,\bar{z})+O(n+1).$
\end{itemize}
\end{teo}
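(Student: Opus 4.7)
The plan is to prove all three statements simultaneously by formalizing the recursive procedure of Section~\ref{s:recu} as a finite induction on the order $\kappa = j+k$ from $\kappa=2$ up to $\kappa=n$, with the inductive hypothesis being that all coefficients $a_{\ell,m}$ with $\ell+m<\kappa$ have already been chosen (possibly as free parameters) so that the corresponding $\varphi_{\ell,m}(1)$ equal $f_{\ell,m}$.

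At the inductive step, fix $(j,k)$ with $j+k=\kappa$. By Lemma~\ref{l:lemequaciolineal}, the nonhomogeneous term $b_{j,k}(t)$ in equation~\eqref{e:edolineal} depends only on the already chosen coefficients $a_{\ell,m}$ with $\ell+m<\kappa$, not on $a_{j,k}$ itself. Solving~\eqref{e:edolineal} and imposing $\varphi_{j,k}(1)=f_{j,k}$ yields equation~\eqref{e:E}, which is affine in the single unknown $a_{j,k}$. In the non-resonant case $\mathrm{e}^{i(j-k-1)\alpha}\neq 1$ the linear coefficient is nonzero and $a_{j,k}$ is uniquely determined; in the resonant case $\mathrm{e}^{i(j-k-1)\alpha}=1$ the coefficient vanishes and~\eqref{e:E} degenerates to the compatibility equation~\eqref{e:compatibilitat}, which either admits no solution (and then no $a_{j,k}$ will work) or is automatically satisfied (and then $a_{j,k}$ remains a free parameter that we carry along).

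Statements (i)--(iii) are then immediate by running the induction up to $\kappa=n$. In (i) no resonance ever occurs, so every $a_{j,k}$ is successively and uniquely determined, yielding a single polynomial vector field of degree at most $n$. In (iii) exactly $\ell$ free parameters are introduced along the way, one per resonant pair whose compatibility equation holds, producing an $\ell$-parameter family. In (ii) the same Taylor-coefficient comparison applies to any $\mathcal{C}^{n+1}$ vector field whose linear part is $i\alpha z$, because Lemma~\ref{l:lemequaciolineal} and hence~\eqref{e:E} only use the Taylor expansion of $X$ up to order $n$; thus a failed compatibility equation is a genuine obstruction and no such vector field can exist. The main point requiring care is the propagation of free parameters in (iii): although the later $b_{j,k}(t)$ may depend polynomially on previously freed $a_{\ell,m}$, this dependence does not spoil the affine-in-$a_{j,k}$ structure of the equation at the current order, so the case analysis above still applies verbatim at every step.
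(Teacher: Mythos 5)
Your proposal is correct and follows essentially the same route as the paper: Theorem~\ref{p:compatibilitat} is stated there as a summary of the recursive procedure of Section~\ref{s:recu}, and your finite induction on $\kappa=j+k$ using Lemma~\ref{l:lemequaciolineal}, the affine equation~\eqref{e:E}, and the resonant/non-resonant dichotomy is precisely that argument made explicit. Your closing remark on carrying previously freed parameters $a_{\ell,m}$ into later $b_{j,k}$ matches the paper's own caveat about verifying successive compatibility equations, so nothing is missing relative to the paper's proof.
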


In the next sections we present the explicit expressions for the
vector  fields associated with quadratic and cubic maps of the form
\eqref{e:F}, satisfying Equation \eqref{e:temps1} for $n=2$ and
$n=3$ respectively.

\subsection{Vector fields for quadratic maps}

 The whole scene in the quadratic case is described in the next
proposition. Observe that
 in the above scheme, at order two a resonance can only occur
if $\omega=\mathrm{e}^{i\alpha}$ is a cubic root of unity. This can
be seen by taking the function $r(j,k)=|j-k-1|$ and observing that
it takes the values $r(2,0)=r(1,1)=1$ and $r(0,2)=3$.

\begin{propo}\label{p:propoquadratics}
Set $F(z,\bar{z})=\omega z+\sum_{j+k=2}f_{j,k}z^j\bar{z}^k$, where
$\omega=\mathrm{e}^{i\alpha}$ with $\alpha\in(0,2\pi)$. Then
\begin{enumerate}[(a)]
\item If $\omega$ is not a cubic root of unity, then there exists a unique quadratic vector field satisfying \eqref{e:temps1} with $n=2$,
given by $X(z,\bar{z})=i\alpha z+\sum_{j+k=2} a_{j,k} z^j\bar{z}^k$  where
\begin{equation}\label{e:coefsquadratics}
a_{2,0}={\frac {i\alpha\,f_{{2,0}}}{\omega\, \left( \omega-1 \right)
}},\quad a_{1,1}={\frac {i\alpha\,f_{{1,1}}}{\omega-1}},\quad
a_{0,2}={\frac {3i\alpha\,{\omega}^{2}f_{{0,2}}}{\omega^3-1
  }}.
\end{equation}
\item  Assume that $\omega$ is a cubic root of unity. If $f_{0,2}=0$, then
 there exists an one-parameter family of quadratic vector fields satisfying \eqref{e:temps1} with
$n=2.$ In this case the coefficients $a_{2,0}$ and $a_{1,1}$ of such
a vector field are the ones given in Equation
\eqref{e:coefsquadratics} and $a_{0,2}$ is the free parameter. If
$f_{0,2}\ne 0$ then there is no $\mathcal{C}^3$ vector field
satisfying \eqref{e:temps1} with $n=2.$
\end{enumerate}
\end{propo}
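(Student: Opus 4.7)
The plan is to specialize the general recursive procedure of Section~\ref{s:recu} to $n=2$. The first step is to observe that at this lowest nontrivial order the forcing term $b_{j,k}(t)$ in Equation~\eqref{e:edolineal} depends only on coefficients $a_{\ell,m}$ with $\ell+m<j+k=2$, and therefore vanishes identically. Consequently each of the three unknowns $\varphi_{j,k}$, for $(j,k)\in\{(2,0),(1,1),(0,2)\}$, will satisfy the decoupled linear initial value problem
\[
\dot\varphi_{j,k}(t)=i\alpha\,\varphi_{j,k}(t)+a_{j,k}\,\mathrm{e}^{i(j-k)\alpha t},\qquad \varphi_{j,k}(0)=0.
\]

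Next, I would integrate these three ODEs by variation of constants (equivalently, invoke Equation~\eqref{e:E} directly) and evaluate at $t=1$. This yields, for each pair, an explicit linear equation relating $a_{j,k}$ and $f_{j,k}$. A short computation shows that the coefficient multiplying $a_{j,k}$ factors through $\omega-1$ when $(j,k)=(2,0)$ or $(j,k)=(1,1)$, and through $\omega^3-1$ when $(j,k)=(0,2)$, matching the denominators listed in~\eqref{e:coefsquadratics}. Since $\alpha\in(0,2\pi)$ already excludes $\omega=1$, the sole remaining obstruction is $\omega^3=1$, which is precisely the resonance at order $2$ identified in Remark~\ref{resnivelln}. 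When no such resonance occurs, each equation can be inverted to produce the announced formulas, establishing part~(a).

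For part~(b) I would then assume $\omega^3=1$ with $\omega\neq 1$. The formulas for $a_{2,0}$ and $a_{1,1}$ remain well-defined and uniquely determined as above. The equation for $a_{0,2}$, however, degenerates into the compatibility condition~\eqref{e:compatibilitat}; since $b_{0,2}\equiv 0$, it collapses to the scalar requirement $f_{0,2}=0$. If this holds, $a_{0,2}$ is unconstrained and the claimed one-parameter family follows. If $f_{0,2}\neq 0$, no solution exists, and this cannot be rescued by enlarging the class of vector fields: the derivation of~\eqref{e:edolineal} at order $2$ uses only the Taylor expansion up to that order and is insensitive to any $O(3)$ tail, so no $\mathcal{C}^3$ vector field of the form~\eqref{e:X} can satisfy~\eqref{e:temps1}.

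The hard part, such as it is, is just the clean algebraic simplification bringing the solutions of the three scalar ODEs into the precise form~\eqref{e:coefsquadratics}; everything conceptual has already been packaged into Theorem~\ref{p:compatibilitat} and Remark~\ref{resnivelln}, so the remainder of the argument is essentially bookkeeping.
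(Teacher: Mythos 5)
Your proposal is correct and follows essentially the same route as the paper's proof: both set up the three decoupled linear initial value problems for $\varphi_{2,0},\varphi_{1,1},\varphi_{0,2}$ (with vanishing forcing term $b_{j,k}$ at order two), integrate, evaluate at $t=1$, and observe that the only possible degeneracy is $\omega^3=1$, which turns the $(0,2)$ equation into the compatibility condition $f_{0,2}=0$. Your closing remark that the order-two equations are insensitive to any $O(3)$ tail, so that the obstruction persists for arbitrary $\mathcal{C}^3$ vector fields, is exactly the point the paper handles via Lemma~\ref{l:lemequaciolineal}.
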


\begin{proof} Consider a quadratic map $F(z,\bar{z})=i\alpha z+f_{2,0}z^2+f_{1,1}z\bar{z}+f_{0,2}\bar{z}^2$ and a vector field of the form $
 X(z,\bar{z})=i\alpha z+a_{2,0}z^2+a_{1,1}z\bar{z}+a_{0,2}\bar{z}^2$.
If we search for its associated flow $
\varphi(t;z,\bar{z})=\mathrm{e}^{i\alpha t}z+
\varphi_{2,0}(t)z^2+\varphi_{1,1}(t)z\bar{z}+\varphi_{0,2}(t)\bar{z}^2+O(3)$,
by plugging this expression in the differential equation
$\dot{z}=X(z,\bar{z})$, we get the equations
\eqref{e:varphiquadratsprima}:
\begin{align}
&\dot{\varphi}_{2,0}=i\alpha \varphi_{2,0}+a_{2,0}\mathrm{e}^{2i\alpha t},\nonumber\\
&\dot{\varphi}_{1,1}=i\alpha \varphi_{1,1}+a_{1,1},\label{e:varphiquadratsprima}\\
&\dot{\varphi}_{0,2}=i\alpha \varphi_{0,2}+a_{0,2}\mathrm{e}^{-2i\alpha t}.\nonumber
\end{align}
with the conditions ${\varphi}_{2,0}(0)=0,{\varphi}_{1,1}(0)=0 $ and
${\varphi}_{0,2}(0)=0.$ By integrating these equations, evaluating
their solutions at time $t=1$ and imposing
$\varphi(1;z,\bar{z})=F(z,\bar{z})+O(3)$ we get the corresponding
equations \eqref{e:edolineal}:
\begin{align}
&\varphi_{2,0}(1)=\frac{i}{\alpha} a_{2,0}\left(1-\mathrm{e}^{i\alpha}\right)\mathrm{e}^{i\alpha}=f_{2,0},\nonumber\\
&\varphi_{1,1}(1)=\frac{i}{\alpha} a_{1,1}\left(1-\mathrm{e}^{i\alpha}\right)=f_{1,1},\label{e:varphiquadrats}\\
&\varphi_{0,2}(1)=-\frac{i}{3\alpha}
a_{0,2}\left(1-\mathrm{e}^{-3i\alpha}\right)\mathrm{e}^{i\alpha}=f_{0,2}.\nonumber
\end{align}
Since $\alpha\in(0,2\pi)$, the first two equations can always be
solved giving the  values for $a_{2,0}$ and $a_{1,1}$ in Equation
\eqref{e:coefsquadratics}. The third equation fixes a value of
$a_{0,2}$ unless $\omega$ is a third root of unity, obtaining the
expressions in \eqref{e:coefsquadratics}, thus proving $(a).$

If $\omega$ is a cubic root of the unity, then the  compatibility
condition \eqref{e:compatibilitat} associated with the coefficient
$a_{0,2}$ is $ f_{0,2}=0, $ and the result in statement $(b)$
follows.~\end{proof}

\subsection{Vector fields for cubic maps}

Given a cubic map, to search a cubic vector field satisfying
\eqref{e:temps1} with $n=3$, first we notice that the resonances
only occur when $\mathrm{e}^{i\alpha}$ is a third root of the unity,
when is a square root of the unity, or when is a primitive fourth
root of the unity, see Remark~\ref{resnivelln}. According to Theorem
\ref{p:compatibilitat}, if $\mathrm{e}^{i\alpha}$ is not such a root
of the unity there exists a unique polynomial vector field
satisfying \eqref{e:temps1}.

Also according to Theorem \ref{p:compatibilitat},  if
$\mathrm{e}^{i\alpha}$ is a third root of the unity and the
compatibility condition associated to $a_{0,2}$ is satisfied, then
there exists an one-parameter family of vector fields satisfying
\eqref{e:temps1}.  If $\mathrm{e}^{i\alpha}$ is a square root of the
unity (hence it also is a fourth-root of unity) and the
compatibility condition associated with $a_{3,0}$, $a_{1,2}$ and
$a_{0,3}$ are fulfilled, then there exists a three-parametric family
of such vector fields. And finally, if $\mathrm{e}^{i\alpha}$ is a
primitive quartic root of the unity and the compatibility condition
associated with $a_{0,3}$  holds, then there exists an one-parameter
family of such vector fields. All these four cases are studied in
the next four propositions:

\begin{propo}\label{p:propocubics-nores}
Set $F(z,\bar{z})=\omega z+\sum_{j+k=2}^3f_{j,k}z^j\bar{z}^k$, where
$\omega=\mathrm{e}^{i\alpha}$ with $\alpha\in(0,2\pi)$. If $\omega$
is not a quadratic, cubic or fourth root of unity, then there exists
a unique cubic vector field satisfying \eqref{e:temps1} with $n=3$,
$X(z,\bar{z})=i\alpha z+\sum_{j+k=2}^3 a_{j,k} z^j\bar{z}^k$, where
the coefficients $a_{2,0}$, $a_{1,1}$ and $a_{0,2}$ are the ones
given in \eqref{e:coefsquadratics}, and
$$a_{3,0}={\frac {-i\alpha\, P_{3,0} }{{\omega}^{2} \left(
\omega^3-1 \right)  \left( \omega+1 \right)}},$$ with
$$P_{3,0}=
\left(
\overline{f_{{0,2}}}f_{{1,1}}-2\,f_{{3,0}} \right) {\omega}^{3}+
 2\left( \overline{f_{{0,2}}}f_{{1,1}}+{f^{2}_{{2,0}}}-f_{{3,0
}} \right) {\omega}^{2}+ 2\left( {f^{2}_{{2,0}}}-f_{{3,0}}
 \right) \omega+2\,{f^{2}_{{2,0}}};
$$

$$a_{2,1}=\frac{-i \, P_{2,1}}
{{\omega}^{2} \left( \omega^3-1 \right) ^{2} },
$$
with
\begin{align*}
P_{2,1}=& \left(  \left( i+\alpha \right)   \left| f_{
{1,1}} \right|^{2}+if_{{2,1}} \right) {\omega}^{7}+ \left(
 \left( i+2\,\alpha \right)  \left| f_{{1,1}} \right|^{2}-2\,if_{{1,1}}f_{{2,0}} \right) {\omega}^{6}+\\
 &
\left(
 \left( i+3\,\alpha \right)  \left| f_{{1,1}} \right|^{2}+ \left( 2\,i+6\,\alpha \right)  \left| f_{{0,2}}
 \right|^{2}-f_{{1,1}}f_{{2,0}} \left( i+\alpha \right)
 \right) {\omega}^{5}+\\
 & \left(  \left( -i+2\,\alpha \right)
 \left| f_{{1,1}} \right|^{2}- \left( i+2\,\alpha \right) f_
{{2,0}}f_{{1,1}}-2\,if_{{2,1}} \right) {\omega}^{4}+ \left( 3\,f_{{1,1
}}f_{{2,0}}- \left| f_{{1,1}} \right|^{2} \right)
\times\\
& \left( i-\alpha \right) {\omega}^{3}+ \left( -i  \left| f_{{1,
1}} \right|^{2}-2\,i \left| f_{{0,2}} \right|^{2}+\left( i-2\,\alpha \right) f_{{1,1}} f_{{2,0}} \right) {
\omega}^{2}+\\
& \left( f_{{2,0}}f_{{1,1}} \left( i-\alpha \right) +if_{{2
,1}} \right) \omega-if_{{1,1}}f_{{2,0}};
\end{align*}

$$a_{1,2}={\frac{-i\alpha\,P_{1,2}} {
 \left( \omega^3-1 \right)  \left( \omega+1 \right)   }},$$ with
\begin{align*}
P_{1,2}=& f_{{1,1}}\overline{f_{{2,0}}}\,{\omega}^{4}+
 \left( 2\,\overline{f_{{1,1}}}f_{{0,2}}+\overline{f_{{2,0}}}f_{{1,1}}
-2\,f_{{1,2}} \right) {\omega}^{3}+ \left( 4\,\overline{f_{{1,1}}}f_{{0
,2}}+\overline{f_{{2,0}}}f_{{1,1}}+4\,f_{{0,2}}f_{{2,0}}+\right.\\
&
\left.{f^{2}_{{1,1}}}-2\,f_{{1,2}} \right) {\omega}^{2}+ \left( 2\,f_{{0,2}}f_{{2,0}}+{f^{2}_{{1,1}}}-2\,f_{{1,2}} \right) \omega+{f^{2}_{{1,1}}};
\end{align*}
and
$$
a_{0,3}={\frac{-i\alpha\,{\omega}^{2}\,  P_{0,3}} { \left(
\omega^2+1 \right)   \left( \omega^3-1
 \right)  \left( \omega+1 \right) }},
$$ with
\begin{align*}
P_{0,3}=&
 2f_{{0,2}}\overline{f_{{2,0}}}\,{\omega}^{4}+
 4\,\left( f_{{0,2}}\overline{f_{{2,0}}} -f_{{0,3}
} \right) {\omega}^{3}+ \left( 6\, f_{{0,2}}\overline{f_{{2,0}}}+3\,f_{{0,2}}f_{{1,1}}-4\,f_{{0,3}} \right) {\omega}^{2}+\\
&
 2\,\left( f_{{0,2}}f_{{1,1}}-2\,f_{{0,3}} \right) \omega+f_{{0,2}}f_{
{1,1}}.
\end{align*}
\end{propo}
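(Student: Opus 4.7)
The plan is to apply the recursive procedure of Subsection~\ref{s:recu} up to order three, following the template already used in the proof of Proposition~\ref{p:propoquadratics}. Since the hypothesis excludes $\omega$ from being a cubic root of unity, Proposition~\ref{p:propoquadratics}(a) furnishes uniquely the coefficients $a_{2,0}, a_{1,1}, a_{0,2}$ of the desired vector field via formulas~\eqref{e:coefsquadratics}. It remains to show that the cubic coefficients $a_{3,0}, a_{2,1}, a_{1,2}, a_{0,3}$ are uniquely determined and to derive their explicit form.

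For existence and uniqueness, I would invoke Theorem~\ref{p:compatibilitat}(i): for $(j,k)$ with $j+k=3$ we have $j-k-1 \in \{2, 0, -2, -4\}$, so a resonance at this order requires $\omega^2=1$ or $\omega^4=1$; both are excluded by the assumption that $\omega$ is not a second or fourth root of unity. The case $j-k-1=0$, corresponding to $(j,k)=(2,1)$, is non-resonant and handled directly by the first branch of the recursion. Hence every linear equation determining a cubic coefficient has a unique solution.

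To obtain the explicit formulas, I would expand $\varphi(t;z,\bar z) = \mathrm{e}^{i\alpha t}\, z + \sum_{j+k=2}^{3} \varphi_{j,k}(t)\, z^j \bar z^k + O(4)$, substitute into $\dot z = X(z,\bar z)$, and collect the coefficient of $z^j\bar z^k$ for each $(j,k)$ with $j+k=3$. This produces the linear ODE
\[
\dot\varphi_{j,k}(t)= i\alpha\,\varphi_{j,k}(t) + a_{j,k}\,\mathrm{e}^{i(j-k)\alpha t} + b_{j,k}(t),\qquad \varphi_{j,k}(0)=0,
\]
where $b_{j,k}(t)$ is a finite sum of exponentials $\mathrm{e}^{im\alpha t}$ whose coefficients are polynomials in the already known $a_{2,0},a_{1,1},a_{0,2}$ and their conjugates. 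Concretely, $b_{j,k}(t)$ collects the cross-terms obtained when each quadratic monomial $a_{p,q}\varphi^p\bar\varphi^q$ of $X$ is expanded to order three, pairing the linear part $\mathrm{e}^{i\alpha t}z$ (or its conjugate $\mathrm{e}^{-i\alpha t}\bar z$) with the previously computed quadratic functions $\varphi_{\ell,m}(t)\,z^\ell\bar z^m$.

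Solving each ODE by variation of parameters, evaluating at $t=1$, imposing $\varphi_{j,k}(1)=f_{j,k}$, and substituting~\eqref{e:coefsquadratics} yields a linear equation for $a_{j,k}$ whose coefficient is nonzero precisely by the non-resonance hypothesis. The main obstacle is purely algebraic bookkeeping: each integral $\int_0^1 \mathrm{e}^{im\alpha\tau}\,d\tau$ produces a factor $(\omega^m-1)/(im\alpha)$ (or $1$ when $m=0$), and after combining the resulting rational functions of $\omega$ one must simplify to obtain the compact polynomials $P_{3,0},P_{2,1},P_{1,2},P_{0,3}$ displayed in the statement, whose denominators are exactly the non-resonance factors $(\omega^3-1)(\omega+1)$, $(\omega^3-1)^2$, $(\omega^3-1)(\omega+1)$ and $(\omega^2+1)(\omega^3-1)(\omega+1)$. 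This final simplification is most efficiently carried out and verified with a computer algebra system.
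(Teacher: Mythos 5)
Your proposal is correct and follows essentially the same route as the paper's proof: reuse the quadratic computation of Proposition~\ref{p:propoquadratics}, rule out resonances at order three via the values of $|j-k-1|$ under the stated hypotheses on $\omega$, then solve the linear initial value problems~\eqref{e:edolineal} for $j+k=3$, impose $\varphi_{j,k}(1)=f_{j,k}$, and isolate each $a_{j,k}$ (the paper likewise only displays the $a_{3,0}$ computation and leaves the remaining algebra, which is of the same nature, to symbolic computation).
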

\begin{proof}
Consider the cubic map $F(z,\bar{z})$ and a cubic vector field $
 X(z,\bar{z})$.
To search for the flow $
\varphi(t;z,\bar{z})=\mathrm{e}^{i\alpha}z+\sum_{j+k=2}^3\varphi_{j,k}(t)z^j\bar{z}^k+O(4)$
associated with $X(z,\bar{z})$, we plug this expression in the
differential equation $\dot{z}=X(z,\bar{z})$, and we get the
corresponding equations~\eqref{e:edolineal}. The equations
corresponding to the quadratic terms are the ones obtained in the
proof of Proposition \ref{p:propoquadratics}, that is Equations
\eqref{e:varphiquadratsprima}--\eqref{e:varphiquadrats}, thus we
obtain the same terms for the quadratic terms of the vector field.
To obtain the cubic terms we follow the same procedure. For reasons
of space we omit the steps to obtain the expression of all the four
equations \eqref{e:edolineal} and its corresponding solutions. We
only show the case of the equation corresponding to the coefficient
$a_{3,0}$. Indeed, we get:
$$
\dot{\varphi}_{3,0}(t)=i\alpha\,\varphi_{3,0}(t)+a_{3,0}\mathrm{e}^{3i\alpha\,t}+
\frac {i\alpha\, Q_{3,0}(t)}{{\omega}^{2} \left( \omega-1 \right)
^{2} \left( { \omega}^{2}+\omega+1 \right) },
$$
where
$$
Q_{3,0}(t)= -2\left( {\omega}^{2}+\omega+1 \right)\,{f^{2}_{{2,0}}}
{{\rm e}^{2\, i\alpha\,t}}+ \left(
\overline{f_{{0,2}}}f_{{1,1}}{\omega}^{3}+2 \, \left(
{\omega}^{2}+\omega+1 \right)\,f^{2}_{2,0}\right) {{\rm
e}^{3\,i\alpha\,t}}-\overline{f_{{0,2}}}f_{{1,1}}{\omega}^{3}.
$$
By integrating this differential equation and imposing Equation \eqref{e:temps1},
we obtain the corresponding Equation \eqref{e:E}:
$$
{\frac {-i\left( \omega^2-1 \right) \omega\,
a_{{3,0}}}{2\alpha}}+{\frac {\overline{f_{{0,2}}}f_{{1,1}}{\omega}^{3}+ \left( 2\,
\overline{f_{{0,2}}}f_{{1,1}}+2\,{f^{2}_{{2,0}}} \right) {\omega}^{2}+
2\,{f^{2}_{{2,0}}}\,\omega\,+2\,{f^{2}_{{2,0}}}
}{2\omega\,
 \left( {\omega}^{2}+\omega+1 \right) }}=f_{{3,0}},
$$
thus we get the expression of the coefficient $a_{3,0}$ in the
statement.  The other expressions are obtained in a similar
way.~\end{proof}

\begin{propo}
Set $F(z,\bar{z})=\omega z+\sum_{j+k=2}^3f_{j,k}z^j\bar{z}^k,$ where
$\omega=\mathrm{e}^{i\alpha}$ is a primitive third root of unity.
Then there exists a  cubic vector field satisfying~\eqref{e:temps1}
for $n=3$ if and only if $f_{0,2}=0$. In this case, there is an
one-parameter family of cubic vector fields
satisfying~\eqref{e:temps1} for $n=3,$ whose coefficients $a_{2,0}$
and $a_{1,1}$ are the ones given in Equation
\eqref{e:coefsquadratics}, $a_{0,2}$ is a free parameter, and
$$ a_{3,0}=\,{\frac { \overline{a_{{0,2}}}f_{{1,1}}({\omega}-1)^2
-6\,i\alpha f_{{3,0}} \omega+6\,i\alpha{f^{2}_{{2,0}}}
}{3\,{\omega}\left( \omega-1\right)}},\quad a_{2,1}=\frac{i\,
P_{2,1}}{3{\alpha}{ \left( \omega-1 \right) ^{2}}},
$$
with
\begin{align*}
P_{2,1}=&\left(
9\,i\alpha\,f_{{1,1}}f_{{2,0}}-3\,f_{{2,0}}f_{{1,1}}{\alpha}^{
2}+3\,i\alpha\,f_{{2,1}}-2\, \left(  \left| a_{{0,2}} \right|
 \right) ^{2} \right) {\omega}^{2}\\
  &+ \left( 3\,i \left(  \left| f_{{1,1
}} \right|  \right) ^{2}\alpha+3\, \left(  \left| f_{{1,1}} \right|
 \right) ^{2}{\alpha}^{2}
 -3\,i\alpha\,f_{{1,1}}f_{{2,0}}+3\,i\alpha\,f
_{{2,1}}+4\, \left(  \left| a_{{0,2}} \right|  \right) ^{2} \right)
\omega\\
&-3\,i \left(  \left| f_{{1,1}} \right|  \right) ^{2}\alpha-6\,i
\alpha\,f_{{1,1}}f_{{2,0}}-6\,i\alpha\,f_{{2,1}}-2\, \left(  \left|
a_ {{0,2}} \right|  \right) ^{2} ;
\end{align*}
 $$ a_{1,2}={\frac {i\, P_{1,2} }{3\,\left( 1-\omega\right)
 }},
$$
with
\begin{align*}
P_{1,2}=&  \left(
-6\,if_{{1,2}}\alpha+2\,\overline{f_{{1,1}}}a_{{0,2}}+4\,f_{{
2,0}}a_{{0,2}} \right) {\omega}^{2}+i \left(
3\,i\alpha\,{f_{{1,1}}}^{
2}-4\,\overline{f_{{1,1}}}a_{{0,2}}-2\,f_{{2,0}}a_{{0,2}} \right)
\omega\\
& +i \left( 3\,i\overline{f_{{2,0}}}\alpha\,f_{{1,1}}+2\,
\overline{f_{{1,1}}}a_{{0,2}}-2\,f_{{2,0}}a_{{0,2}} \right);
\end{align*} and
 $$ a_{0,3}={\frac {P_{0,3}}{ 3\,\left( 1-\omega \right)}},
$$  with
\[
P_{0,3}=-\left(6\,a_{{0,2}}\overline{f_{{2,0}}}+f_{{1,1}}a_{{0,2}}
\right) { \omega}^{2}+ \left(
2\,a_{{0,2}}\overline{f_{{2,0}}}-f_{{1,1}}a_{{0,2} } \right)
\omega-12\,if_{{0,3}}\alpha+4\,a_{{0,2}}\overline{f_{{2,0}}}
+2\,f_{{1,1}}a_{{0,2}}.
\]
\end{propo}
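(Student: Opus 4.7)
The plan is to apply the recursive procedure of Section~\ref{s:recu}, and in particular Theorem~\ref{p:compatibilitat}, order by order in the degree, treating the free parameter that arises at quadratic order as an unknown which then propagates into the cubic inhomogeneities.

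At quadratic order the statement reduces to Proposition~\ref{p:propoquadratics}: since $\omega$ is a primitive cubic root of unity, the only quadratic resonance corresponds to the pair $(j,k)=(0,2)$ (the one with $|j-k-1|=3$), and the compatibility equation~\eqref{e:compatibilitat} collapses to $f_{0,2}=0$. Hence, if $f_{0,2}\ne 0$, no $\mathcal{C}^{3}$ vector field has its time-$1$ flow matching $F$ up to $O(3)$, and \emph{a fortiori} no cubic vector field achieves the stronger matching up to $O(4)$ required here. Assuming $f_{0,2}=0$, Proposition~\ref{p:propoquadratics} fixes $a_{2,0}$ and $a_{1,1}$ by~\eqref{e:coefsquadratics} and leaves $a_{0,2}$ as a free parameter; this is precisely the origin of the one-parameter family.

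Next, I would verify that no additional resonance arises at cubic order. By Remark~\ref{resnivelln}, a resonance appearing for the first time at order three would force $\omega$ to be a root of unity of order $m\in\{2,4\}$, which a primitive cubic root is not. Concretely, the exponents $j-k-1$ with $j+k=3$ take the values $2,0,-2,-4$, so $\omega^{j-k-1}\in\{\omega^{2},1,\omega,\bar\omega\}$; only $(j,k)=(2,1)$ gives $j-k-1=0$, for which the procedure isolates $a_{2,1}$ directly, while in the three remaining cases the factor $\omega^{j-k-1}-1$ in~\eqref{e:E} is nonzero and allows one to isolate $a_{3,0}, a_{1,2}, a_{0,3}$ uniquely. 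Plugging the quadratic flow (which now carries the free parameter $a_{0,2}$ and its conjugate through the $\bar{\varphi}$-terms) into Lemma~\ref{l:lemequaciolineal}, the inhomogeneities $b_{j,k}(t)$ become explicit trigonometric polynomials whose coefficients are bilinear in the $f_{\ell,m}$ with $\ell+m=2$ and in the parameter $a_{0,2}$. Integrating with $\varphi_{j,k}(0)=0$, evaluating at $t=1$, imposing $\varphi_{j,k}(1)=f_{j,k}$, and simplifying with $\omega^{3}=1$ yields the displayed closed forms; the appearance of $|a_{0,2}|^{2}$ and $\overline{a_{0,2}}$ in the formulas for $a_{2,1}, a_{1,2}, a_{0,3}$ simply records the interaction of $a_{0,2}\bar z^{2}$ with itself and with the other quadratic monomials along the flow.

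The main obstacle is purely computational bookkeeping: the inhomogeneities $b_{j,k}(t)$ are cumbersome once $a_{0,2}$ is kept as a free complex parameter, and the final simplification of rational expressions in $\omega$ using $\omega^{3}=1$ (equivalently $\omega^{2}+\omega+1=0$) must be organized carefully so as to match the stated form. No conceptual step beyond those already exercised in the proof of Proposition~\ref{p:propocubics-nores} is required.
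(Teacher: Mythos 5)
Your proposal is correct and follows essentially the same route as the paper: the only compatibility condition is the quadratic one $f_{0,2}=0$ coming from the $(0,2)$ resonance, after which $a_{0,2}$ is kept as a free parameter, one checks via Remark~\ref{resnivelln} that no new resonance occurs at cubic order for a primitive cubic root of unity, and the cubic coefficients are obtained by integrating the linear equations~\eqref{e:edolineal} whose inhomogeneities now carry $a_{0,2}$. The paper's proof does exactly this, displaying only the computation of $a_{3,0}$ as a sample and leaving the rest to analogous bookkeeping.
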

\begin{proof} As  mentioned before, and as can be seen in the proof
of Proposition \ref{p:propoquadratics}, when $\mathrm{e}^{i\alpha}$ is
 a third root of the unity, the only compatibility condition that
  appears is the one associated with the coefficient  $a_{0,2}$,
  and it is $f_{0,2}=0$. Assuming now this condition, setting $a_{0,2}$
  as a free parameter and fixing the values of the coefficients $a_{2,0}$ and $a_{1,1}$ as
  the ones in Equation \eqref{e:coefsquadratics}, we proceed to compute  the coefficients of the cubic term.
As in the proof of Proposition~\ref{p:propocubics-nores}, we only
show how to obtain the the coefficient $a_{3,0}$. Indeed, we get:
$$
\dot{\varphi}_{3,0}(t)=i\alpha\,\varphi_{3,0}(t)+a_{3,0}\mathrm{e}^{3i\alpha\,t}
+\frac { \widetilde{Q}_{3,0}(t)}{3\,{\omega}^{2} \left( \omega-1
\right) ^{2}},
$$
where  $$ \widetilde{Q}_{3,0}(t)= -6\,i\alpha{f^{2}_{{2,0}}}{{\rm
e}^{2\,i\alpha\,t}}+ \left( -
\overline{a_{{0,2}}}f_{{1,1}}{\omega}^{2}
+6\,i\alpha\,{f^{2}_{{2,0}}}+\overline{a_{{0,2}}}f_{{1,1}}\right)
{{\rm e}^{3\,i\alpha
\,t}}+\overline{a_{{0,2}}}f_{{1,1}}\left(\omega^2-1\right).
$$
By integrating this equation, imposing Equation \eqref{e:temps1} and
taking into account that $\omega^3=1$, we get that the corresponding
Equation \eqref{e:E} is:
 $$ {\frac {i\left( \omega-1\right)
a_{{3,0}}}{2\alpha}}-{\frac {i\left( \overline{a_{{0,2}}}f_{{1,1}}
\left( {\omega}-1\right)^{2}+6\,i\alpha{f^{2}_{{2,0}}} \right) }{
6\omega\,\alpha}}=f_{{3,0}}.
$$
Thus we get the expression of the coefficient $a_{3,0}$ in the statement. The other expressions are obtained similarly.~\end{proof}

If  $\omega=\mathrm{e}^{i\alpha}$ is a squared root of the unity,
then $\alpha=\pi$ (since $\alpha\neq 0$). In this case the
compatibility conditions \eqref{e:compatibilitat}  are the ones
associated with the coefficients $a_{3,0}$ and $a_{1,2}$ but also
$a_{0,3}$, because $\omega^2=1$ implies $\omega^4=1.$ Proceeding as
in the previous results, we obtain:

\begin{propo}
Set $F(z,\bar{z})=-z+\sum_{j+k=2}^3f_{j,k}z^j\bar{z}^k.$ Then there
exists a cubic vector field satisfying \eqref{e:temps1} for $n=3,$
if and only if
\begin{align*}
f_{{3,0}}&=-\frac{1}{2}\,f_{{1,1}}\overline{f_{{0,2}}}-{f^{2}_{{2,0}}},\\
f_{{1,2}}&=-\frac{1}{2}\,{f^{2}_{{1,1}}}-\frac{1}{2}\,f_{{1,1}}\overline{f_{{0,2}}}-f_{
{2,0}}f_{{0,2}}-f_{{0,2}}\overline{f_{{1,1}}},\\
f_{{0,3}}&=-\frac{1}{2}\,f_{{0
,2}}\, \left( 2\,\overline{f_{{2,0}}}+f_{{1,1}} \right)
.
\end{align*}
If these equations are fulfilled, then there is a  three-parameter
family of cubic vector fields satisfying \eqref{e:temps1} for $n=3,$
and it is given by
$$
a_{{2,0}}=\frac{\pi i}{2}\,f_{{2,0}},\quad a_{{1,1}}=-\frac{\pi i}{2}\,f_{{1,1}},\quad a_{{0,2}}=-\frac{3\pi i}{2}\,f_{{0,2}},
$$
$$
a_{{2,1}}=\frac{1}{4}\, \left( i\pi-2 \right)  \left| f_{{1,1}}
 \right|^{2}+\frac{1}{2}\, \left( 3\,i\pi-2 \right)   \left|
f_{{0,2}} \right|   ^{2}
-\left(\frac{3}{2}+\frac{\pi i}{4}\right)f_{{2,0}}f_{{1,1}}
-f_{{2,1}},
$$
and $a_{3,0},a_{1,2}$ and $a_{0,3}$ are free parameters.\end{propo}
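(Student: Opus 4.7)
The proof is a direct application of the recursive procedure of Section \ref{s:recu} in the special case $\alpha=\pi$, $\omega=-1$. The first step is to identify all resonances by examining when $\mathrm{e}^{i(j-k-1)\pi}=1$, equivalently when $j-k-1$ is even. At quadratic order this fails for every $(j,k)$ with $j+k=2$ (the values $j-k-1$ are $1,-1,-3$), so no compatibility condition arises and Equation \eqref{e:coefsquadratics} from Proposition \ref{p:propoquadratics} specialises directly, upon substituting $\omega=-1$ and $\alpha=\pi$, to the four stated values for $a_{2,0}$, $a_{1,1}$, $a_{0,2}$.

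At cubic order the values $j-k-1$ are $2,0,-2,-4$, so a resonance occurs at $(j,k)=(3,0),(1,2),(0,3)$, while $(j,k)=(2,1)$ falls into the non-resonant sub-case $j-k-1=0$ already treated in the general scheme. The plan is therefore: (a) for each $(j,k)$ with $j+k=3$, write down the linear ODE \eqref{e:edolineal}, whose driving term $b_{j,k}(t)$ is assembled from the now-known quadratic coefficients $a_{2,0}$, $a_{1,1}$, $a_{0,2}$ and their conjugates; (b) integrate explicitly, evaluate at $t=1$ and impose $\varphi_{j,k}(1)=f_{j,k}$; (c) for the three resonant indices the unknown $a_{j,k}$ disappears from the equation, leaving the compatibility condition \eqref{e:compatibilitat} purely in terms of the $f_{\ell,m}$; solving these three conditions yields the three claimed relations on $f_{3,0}$, $f_{1,2}$, $f_{0,3}$; (d) for $(2,1)$, the non-resonant formula analogous to the first of \eqref{e:varphiquadrats} determines $a_{2,1}$ uniquely.

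The main obstacle is bookkeeping in step (a): each $b_{j,k}(t)$ is a finite sum $\sum_\gamma P_\gamma(t)\mathrm{e}^{i\gamma t}$ whose coefficients mix the quadratic $a_{\ell,m}$ with $\overline{a_{\ell',m'}}$, and the integrals evaluated at $t=1$ must be simplified using $\omega^{2}=1$ and $\mathrm{e}^{\pm i\pi}=-1$, together with the explicit quadratic expressions $a_{2,0}=\tfrac{\pi i}{2}f_{2,0}$, $a_{1,1}=-\tfrac{\pi i}{2}f_{1,1}$, $a_{0,2}=-\tfrac{3\pi i}{2}f_{0,2}$. These are integrals of the same type computed in the proof of Proposition \ref{p:propocubics-nores}, so no new technique is required, only patient algebraic manipulation, which after collapsing the terms produces the compact formulas stated in the theorem. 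Finally, since each of the three resonant compatibility equations involves a single coefficient $f_{3,0}$, $f_{1,2}$ or $f_{0,3}$ linearly (with the remaining $f_{\ell,m}$'s fixed by the lower-order data), the three conditions are simultaneously necessary and sufficient, and the freedom of the three coefficients $a_{3,0}$, $a_{1,2}$, $a_{0,3}$ gives precisely the announced three-parameter family.
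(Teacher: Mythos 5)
Your proposal is correct and follows exactly the recursive procedure of Section~\ref{s:recu} that the paper itself invokes for this proposition (the paper offers no further proof beyond ``proceeding as in the previous results''): with $\omega=-1$ the quadratic level is resonance-free and specializes \eqref{e:coefsquadratics} to the stated $a_{2,0},a_{1,1},a_{0,2}$, while at cubic level the indices $(3,0),(1,2),(0,3)$ are resonant, producing the three compatibility conditions on $f_{3,0},f_{1,2},f_{0,3}$ and the three free parameters, and $(2,1)$ falls into the $j-k-1=0$ sub-case that determines $a_{2,1}$ uniquely. Only two cosmetic slips: you say ``four stated values'' for the three quadratic coefficients, and the $(2,1)$ equation is the $j-k-1=0$ display of Section~\ref{s:recu} rather than an analogue of the first line of \eqref{e:varphiquadrats}.
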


The resonant case that appears when   $\omega=\mathrm{e}^{i\alpha}$ is a primitive fourth root of the unity is studied in the following result:

\begin{propo}\label{p:propocubics-resquarta}
Set $F(z,\bar{z})=\omega z+\sum_{j+k=2}^3f_{j,k}z^j\bar{z}^k.$ Then,
\begin{enumerate}[(a)]
\item If $\omega=i$ (that is $\alpha=\frac{\pi}{2}$), then there exists a cubic vector field satisfying \eqref{e:temps1} for
$n=3,$
  if and only if
\begin{equation}\label{e:compatibilitatquartica}
 f_{{0,3}}=\frac{1}{2}\,f_{{0,2}}\, \left(  \left( 2+2\,i \right) \overline{f_{{2,0}}}+
 \left( 1-i \right) f_{{1,1}} \right).
\end{equation} If this equation is fulfilled, then there is an one-parameter family of
vector fields satisfying \eqref{e:temps1} for $n=3,$ given by
$$
a_{{2,0}}=-\frac{\pi}{4}\left( 1+\,i \right)f_{{2,0}},\quad
a_{{1,1}}= \frac{\pi}{4} \left( 1-i \right) f_{{1,1}},\quad
a_{{0,2}}= \frac{3\pi}{4}\left( 1+\,i \right) f_{{0,2}}
$$ and
\begin{align*}
a_{{3,0}}=&-\frac{\pi}{2}\, \left( - \left( 1+\frac{i}{2} \right) f_{{1,1}}
\overline{f_{{0,2}}}+i{f^{2}_{{2,0}}}+f_{{3,0}} \right),\\
a_{{2,1}}=&\frac{1}{4}\, \left( -2+(\pi-2)\,\,i \right)    \left| f_{{1,1}}
 \right|  ^{2}+\frac{1}{2}\, \left( -2+(3\pi+2)\, i\right)
 \left| f_{{0,2}} \right|   ^{2}\\
&+\frac{1}{4}\,\left( 6+(\pi-2)
\,i \right) f_{{1,1}}f_{{2,0}} -if_{{2,1}},\\
a_{{1,2}}=&\frac{\pi}{2}\, \left( - \left( 2+i \right) f_{{0,2}}
\overline{f_{{1,1}}}-\frac{i}{2}f_{{1,1}}\overline{f_{{2,0}}}- \left( 2-i
 \right) f_{{2,0}}f_{{0,2}}+\frac{i}{2}{f^{2}_{{1,1}}}+f_{{1,2}}
 \right),
\end{align*}
being $a_{0,3}$ the free parameter.
\item If $\omega=-i$ (that is $\alpha=\frac{3\pi}{2}$), then there exists
 a cubic vector field satisfying \eqref{e:temps1} for $n=3,$ if and only if
$$
f_{{0,3}}=\frac{1}{2}\,f_{{0,2}}\, \left(  \left( 1+i \right) f_{{1,1}}+ \left( 2-2\,i
 \right) \overline{f_{{2,0}}} \right).
$$
If this equation is fulfilled, then the there is an one-parameter
family of cubic vector fields  satisfying \eqref{e:temps1} for
$n=3,$ and it is given by
$$a_{{2,0}}= \frac{3\pi}{4}\left( 1-\,i
 \right) f_{{2,0}},\quad
a_{{1,1}}=
 -\frac{3\pi}{4}\left( 1+\,i \right) f_{{1,1}},\quad
a_{{0,2}}=\frac{9\pi}{4} \left( -1+\,i \right) f_{{0,2}}
$$ and
\begin{align*}
a_{{3,0}}=&-\frac{3\pi}{2}\, \left(  \left( 1-\frac{i}{2} \right) f_{{1,1}}
\overline{f_{{0,2}}}+i{f^{2}_{{2,0}}}-f_{{3,0}} \right),\\
a_{{2,1}}=&\frac{1}{4}\, \left( -2+(3\pi+2)  \,i \right)   \left| f_{{1,1
}} \right|   ^{2}+\frac{1}{2}\, \left( -2+(9\pi-2)\,i \right)
   \left| f_{{0,2}} \right|   ^{2}\\
   &+\left(\frac{3}{2}+i \left( \frac{1}{2}+\frac{3}{4}\,\pi \right) \right)
    f_{{2,0}}f_{{1,1}}+if_{{2,1}},\\
a_{{1,2}}=&\frac{3\pi}{2}\, \left(  \left( 2-i \right) f_{{0,2}}
\overline{f_{{1,1}}}-\frac{i}{2}\overline{f_{{2,0}}}f_{{1,1}}+ \left( 2+i
 \right) f_{{2,0}}f_{{0,2}}+\frac{i}{2}{f_{{1,1}}}^{2}-f_{{1,2}}
 \right),
\end{align*}
being $a_{0,3}$ the free parameter.
\end{enumerate}
\end{propo}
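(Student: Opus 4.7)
The plan is to apply Theorem~\ref{p:compatibilitat} combined with the recursive procedure of Section~\ref{s:recu}, specialized to the primitive fourth roots of unity. I will discuss case~(a), $\omega=i$, $\alpha=\pi/2$, in detail; case~(b) with $\omega=-i$ and $\alpha=3\pi/2$ follows along the same lines with appropriate sign changes.

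First I would locate the resonances. The resonance condition $\omega^{|j-k-1|}=1$ with $j-k-1\neq 0$ forces $|j-k-1|$ to be a positive multiple of $4$. At order~$2$, the values of $|j-k-1|$ for $(j,k)\in\{(2,0),(1,1),(0,2)\}$ are $1,1,3$, so no resonance arises, and Proposition~\ref{p:propoquadratics}(a) uniquely determines $a_{2,0},a_{1,1},a_{0,2}$ via~\eqref{e:coefsquadratics}; substituting $\omega=i$ and $\alpha=\pi/2$ and rationalizing yields exactly the displayed values. At order~$3$ the corresponding values for $(3,0),(2,1),(1,2),(0,3)$ are $2,0,2,4$; hence only $(0,3)$ is resonant, while $(2,1)$ falls in the special branch $j-k-1=0$ of~\eqref{e:E}, which still isolates $a_{2,1}$ uniquely.

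Next, for $(j,k)\in\{(3,0),(2,1),(1,2)\}$, I would invoke Lemma~\ref{l:lemequaciolineal}: compute $b_{j,k}(t)$ as the coefficient of $z^j\bar z^k$ in the order-$3$ part of $a_{2,0}\varphi^2+a_{1,1}\varphi\bar\varphi+a_{0,2}\bar\varphi^2$, after plugging in the already-known expansion $\varphi(t;z,\bar z)=e^{i\alpha t}z+\sum_{\ell+m=2}\varphi_{\ell,m}(t)z^\ell\bar z^m+O(3)$. Each $\varphi_{\ell,m}(t)$ is a sum of two exponential modes, so $b_{j,k}(t)$ is an explicit trigonometric polynomial. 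Solving~\eqref{e:E} for $a_{j,k}$ and simplifying with $\omega=i$, $\omega^2=-1$ and $\omega^4=1$ produces the expressions displayed in the statement.

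Finally, for $(0,3)$, Equation~\eqref{e:E} degenerates and becomes the compatibility condition $\omega\int_0^1 b_{0,3}(\tau)\,e^{-i\alpha\tau}\,d\tau=f_{0,3}$. Computing $b_{0,3}(t)$ (the $\bar z^3$-coefficient at order~$3$), integrating against $e^{-i\alpha\tau}$, and using the relations among powers of $\omega$, this condition collapses precisely to~\eqref{e:compatibilitatquartica}. When it holds, $a_{0,3}$ becomes a free parameter, yielding the claimed one-parameter family. The main obstacle is the algebraic bookkeeping: assembling all Fourier modes in $b_{0,3}(t)e^{-i\alpha t}$, separating the resonant ones (which integrate to a nonzero constant on $[0,1]$) from the non-resonant ones, and then checking that, after expressing the quadratic $a_{\ell,m}$ in terms of the $f_{\ell,m}$, the resulting identity reduces exactly to~\eqref{e:compatibilitatquartica} without any residual terms.
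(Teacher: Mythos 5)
Your proposal is correct and follows essentially the same route as the paper: the paper derives this proposition by "proceeding as in the previous results," i.e., exactly the recursive scheme of Section~\ref{s:recu} (solve \eqref{e:edolineal} order by order, isolate $a_{j,k}$ via \eqref{e:E} when $\mathrm{e}^{i(j-k-1)\alpha}\neq 1$, and extract the compatibility condition \eqref{e:compatibilitat} for the single resonant pair $(0,3)$). Your identification of the resonance structure for $\omega=\pm i$ and of which coefficients are determined versus free matches the paper's argument, with the remaining work being the same algebraic bookkeeping the paper itself omits.
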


\section{Proof of Theorem \ref{t:main}}\label{s:proof}

 Theorem \ref{t:main} is a consequence of the following two results.
  The first one is proved in \cite{CGM18} but  for completeness we include
   a sketch of its proof. The second one is a consequence of the results in the previous section.

\begin{propo}\label{p:maps} The two polynomial maps
$$
F_1(z,\bar{z})=iz+(1-3i)z^2+z\bar{z}\quad\mbox{and} \quad F_2(z,\bar{z})=\frac{1}{2}\left(1+i\sqrt{3}\right)z-z^2\bar{z},
$$
have the origin as  a LAS
 fixed point for both of them, while the composition map $F_2\circ F_1$ has the origin as a repeller fixed point.
\end{propo}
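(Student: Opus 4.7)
The plan is to reduce, via the constructions of Section~2, the stability analysis of each of the three fixed points (of $F_1$, $F_2$, and $F_2\circ F_1$) to the stability analysis of the origin for an associated polynomial vector field, and then to compute the first Lyapunov (focal) value of each such vector field. Recall that for an elliptic critical point of a planar vector field $\dot z = i\alpha z+O(|z|^2)$, the sign of the first nonvanishing focal value decides whether the origin is an attracting or a repelling weak focus; this stability is inherited by the time-$1$ flow and, thanks to the $O(|z|^{n+1})$ matching in Equation~\eqref{e:temps1}, by any polynomial map whose $n$-jet coincides with that of the time-$1$ map (provided the first focal value is determined by coefficients of degree $\le n$, which is our case with $n=3$).

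For $F_1$ the linear part is $\omega_1=i=\mathrm{e}^{i\pi/2}$, a primitive fourth root of unity, placing us in the resonant situation of Proposition~\ref{p:propocubics-resquarta}(a). Since $f_{0,2}=0$ for $F_1$, the compatibility condition~\eqref{e:compatibilitatquartica} is trivially satisfied and a cubic vector field $X_1$ whose flow at time~$1$ matches $F_1$ modulo $O(|z|^4)$ exists. For $F_2$ the linear part $\omega_2=\mathrm{e}^{i\pi/3}$ is a primitive sixth root of unity, hence not a second, third or fourth root of unity, so Proposition~\ref{p:propocubics-nores} yields a unique cubic vector field $X_2$ with the analogous property. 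Evaluating the classical formula for the first focal value in the coefficients $a_{j,k}$ at the origin of each of these vector fields should produce a strictly negative value in both cases, giving the LAS conclusion for $F_1$ and $F_2$.

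For the composition, I would first Taylor-expand $F_2\circ F_1$ through degree three. A short calculation, using that $F_1=iz+O(|z|^2)$ and $F_2(w,\bar w)=\mathrm{e}^{i\pi/3}w-w^2\bar w$, shows that the linear part is $\omega_2\omega_1=\mathrm{e}^{i5\pi/6}$ and that the only cubic contribution comes from $-F_1^2\,\overline{F_1}$, equal to $-iz^2\bar z$. Since $\mathrm{e}^{i5\pi/6}$ is a primitive twelfth root of unity (and in particular not a $k$-th root for $k\in\{2,3,4\}$), Proposition~\ref{p:propocubics-nores} again produces a unique cubic vector field $\widetilde X$ whose time-$1$ flow agrees with $F_2\circ F_1$ modulo $O(|z|^4)$. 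Computing the first focal value of $\widetilde X$ at the origin will turn out to give a strictly positive value, so the origin is a repelling weak focus of $\widetilde X$ and consequently a repeller for $F_2\circ F_1$.

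The main obstacle is the algebraic bookkeeping: one has to apply the lengthy coefficient formulas of Propositions~\ref{p:propocubics-nores} and~\ref{p:propocubics-resquarta} to the specific data of $F_1$, $F_2$ and the $3$-jet of $F_2\circ F_1$, and then evaluate the first focal value in each of the three cases. These steps are mechanical but tedious, and are most conveniently carried out with a computer algebra system. The Parrondo-type reversal is then encoded in the sign change of the first focal value between $X_1,X_2$ on the one hand and $\widetilde X$ on the other, which is precisely the reason the construction works.
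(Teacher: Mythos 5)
Your route is genuinely different from the paper's, and it inverts the logical flow of the whole article. The paper proves Proposition~\ref{p:maps} entirely within discrete dynamics: it invokes the Birkhoff normal form of a planar map at an elliptic fixed point and quotes from \cite{CGM18} the explicit values $B_1(F_1)=-\frac12-\frac{11}{2}i$, $B_1(F_2)=-\frac12+\frac{\sqrt{3}}{2}i$ and $V_1(F_2\circ F_1)=\frac12\left(3\sqrt{3}-5\right)$, so the signs of the stability constants $V_1=\mathrm{Re}(B_1)$ settle the three claims at once; the vector fields enter only later (Proposition~\ref{p:laslasrep} and the proof of Theorem~\ref{t:main}), where stability is transported \emph{from} the maps \emph{to} the flows. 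You propose to go the other way: realize each of $F_1$, $F_2$ and the $3$-jet of $F_2\circ F_1$ as the time-$1$ map modulo $O(4)$ of an auxiliary cubic vector field and read the stability off the first focal value of that vector field. Your resonance bookkeeping is correct ($\omega=i$ with $f_{0,2}=0$, so \eqref{e:compatibilitatquartica} holds; $\mathrm{e}^{i\pi/3}$ and $\mathrm{e}^{i5\pi/6}$ are primitive sixth and twelfth roots of unity, so Proposition~\ref{p:propocubics-nores} applies), and the transfer principle you need --- that an $O(4)$ perturbation of the time-$1$ map cannot change the stability once the relevant first-order invariant is nonzero --- is exactly the ``$V_1$ depends only on the third-order jet'' argument the paper uses in the proof of Theorem~\ref{t:main}, run in reverse. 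So the strategy is sound in principle.

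The gap is that the decisive step never actually happens: the entire content of the proposition is the three signs, and you only assert that the focal values ``should'' be negative for $X_1,X_2$ and ``will turn out'' positive for $\widetilde X$. Nothing structural in your construction forces these signs; they must be computed, and until they are, nothing is proved. Your detour is also strictly longer than necessary: the first Birkhoff stability constant of a map is an explicit expression in its coefficients up to order three (\cite[Equation~(3)]{CGM18}), so $V_1(F_1)$, $V_1(F_2)$ and $V_1(F_2\circ F_1)$ can be evaluated directly, without solving for any $a_{j,k}$, without a focal-value formula for vector fields, and without the comparison --- true, but nowhere justified in your write-up --- between the first Lyapunov quantity of a vector field and the first Birkhoff stability constant of its time-$1$ map (one needs that they are nonzero simultaneously and with the same sign, e.g.\ by integrating $\frac{d}{dt}|z|^2$ over one time unit in normal-form coordinates). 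To complete your version you must supply that comparison lemma and then carry out the three focal-value evaluations; as written, the key inequalities are conjectured rather than established.
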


\begin{proof}
Let $\mathcal{U}$ be a small enough neighborhood of the origin. A
$\mathcal{C}^{2m+2}$ map $F$ in $\mathcal{U}$ with an elliptic fixed
point whose eigenvalues $\lambda,\bar \lambda=1/\lambda,$  are not
roots of unity of order $\ell$ for $0<\ell\leq 2m+1,$  is locally
conjugate to its \emph{{Birkhoff normal form}}:\[ F_{B}(z,\bar
z)=\lambda z\Big(1+\sum_{j=1}^{m} B_j
(z\bar{z})^j\Big)+O(2m+2),\]
see \cite{AP}. The first non-vanishing
number $B_j$ is called the $j$th \emph{Birkhoff
    constant}.
If $V_j=\mathrm{Re}(B_j)< 0$ (resp. $V_j>0$), then the point $p$ is
LAS (resp. repeller), see  \cite[Lem. 4.1]{CGM18} for instance.  The
quantity $V_j$ is called  the $j$th \emph{Birkhoff    stability
constant}. This is so, because the fact that $V_j\ne 0$ implies that
the function $z\overline z = |z|^2$  is a strict Lyapunov function
at the origin for the normal form map $F_B$ of $F.$

In~\cite{CGM18}, both  the Birkhoff and the Birkhoff
stability constants of $F_1$ and $F_2$ are computed obtaining that
$
B_1(F_1)=-\frac{1}{2}-\frac{11}{2}\, i$  and
$B_1(F_2)=-\frac{1}{2}+\frac{\sqrt{3}}{2}\, i.
$
So  $V_1(F_j)=-\frac{1}{2}<0$ for $j=1,2$, and the origin is LAS for both maps $F_1$ and $F_2$. Also in this reference it is proved that
$V_1(F_2\circ F_1)=\frac{1}{2}\left(3\sqrt{3}-5\right)>0$, so that the origin is a repeller fixed point for the composition map.~\end{proof}

\begin{propo}\label{p:laslasrep}
    Consider the planar polynomial vector fields
    \begin{align*}
    X_{1}(z,\bar{z},\mu)=&\frac{i\pi}{2} z- \left( 1-\frac{i}{2} \right) \pi {z}^{2}+
    \left( \frac{1}{4}-\frac{i}{4} \right) \pi z\bar{z}
    - \left( 3-4 i \right) \pi {z}^{3}\\
    & +\left( \frac{3\pi}{4} -\frac{1}{2}+i \left( \frac{\pi}{2}-\frac{11}{2} \right)  \right) {z}^{2}\bar{z}
    +\frac{3\pi}{4}  z\bar{z}^{2}+\mu\,\bar{z}^{3},
    \end{align*}
    where $\mu$ is a free parameter, and
    $$
    X_2(z,\bar{z})=\frac{i\pi}{3} z+ \left( -\frac{1}{2}+\frac{i\sqrt {3}}{2} \right) {z}^{2}\bar{z}.
    $$
    Let $\varphi_j(t;z,\bar{z}), j=1,2$ be their respective associated flows. Then, for $z$ in a small enough neighborhood of the origin
    \[
\varphi_j(1;z,\bar{z})=F_j(z,\bar{z})+O(4),\quad j=1,2,
    \]
where the maps  $F_j$ are given in Proposition~\ref{p:maps}.
\end{propo}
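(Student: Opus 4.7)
The plan is to apply the explicit formulas derived in Section~2 directly, since Propositions~\ref{p:propoquadratics}, \ref{p:propocubics-nores} and \ref{p:propocubics-resquarta} give, in each resonance regime, the coefficients of the unique (or one-parameter family of) vector fields whose flow satisfies $\varphi(1;z,\bar z)=F(z,\bar z)+O(4)$. So the proof reduces to identifying the correct regime for each $F_j$ and then carrying out an algebraic verification.

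For $F_1$, I would note that the linear eigenvalue is $\omega=i=\mathrm{e}^{i\pi/2}$, which is a primitive fourth root of unity, so we are in the resonant case of Proposition~\ref{p:propocubics-resquarta}(a). Reading off the coefficients of $F_1$ one has $f_{2,0}=1-3i$, $f_{1,1}=1$, $f_{0,2}=0$ and $f_{3,0}=f_{2,1}=f_{1,2}=f_{0,3}=0$. Because $f_{0,2}=0$, the compatibility condition~\eqref{e:compatibilitatquartica} reduces to $f_{0,3}=0$, which is satisfied; hence a one-parameter family of cubic vector fields exists, with $a_{0,3}$ free. Substituting the above values into the formulas for $a_{2,0}, a_{1,1}, a_{0,2}, a_{3,0}, a_{2,1}, a_{1,2}$ given in that proposition and simplifying yields exactly the coefficients appearing in $X_1(z,\bar z,\mu)$, with the free parameter renamed $\mu=a_{0,3}$.

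For $F_2$, the eigenvalue is $\omega=\mathrm{e}^{i\pi/3}$, which is a primitive sixth root of unity and therefore \emph{not} a second, third or fourth root of unity; thus we are in the non-resonant case of Proposition~\ref{p:propocubics-nores}, which gives a unique cubic vector field. Since all the quadratic coefficients $f_{2,0},f_{1,1},f_{0,2}$ and the cubic coefficients $f_{3,0},f_{1,2},f_{0,3}$ of $F_2$ vanish, the formulas of Proposition~\ref{p:propocubics-nores} immediately give $a_{2,0}=a_{1,1}=a_{0,2}=0$ and $a_{3,0}=a_{1,2}=a_{0,3}=0$. Only the term with $f_{2,1}=-1$ contributes: inspecting $P_{2,1}$, only the monomials $i f_{2,1}\omega^{7}$, $-2if_{2,1}\omega^{4}$ and $if_{2,1}\omega$ survive, so $P_{2,1}=-i\omega(\omega^{3}-1)^{2}$, whence
\[
a_{2,1}=\frac{-i\,P_{2,1}}{\omega^{2}(\omega^{3}-1)^{2}}=-\frac{1}{\omega}=-\mathrm{e}^{-i\pi/3}=-\frac{1}{2}+\frac{i\sqrt{3}}{2},
\]
which matches the single cubic coefficient of $X_2$.

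I expect no real obstacle beyond bookkeeping: the whole verification is a substitution into pre-computed formulas. The main chance for error is in the $F_1$ case, where one must carry out the (short but somewhat tedious) arithmetic for $a_{3,0}$, $a_{2,1}$ and $a_{1,2}$; in particular one should double-check that the products $(1+i)(1-3i)$, $i(1-3i)^{2}$ and $(6+(\pi-2)i)(1-3i)$ simplify to the coefficients of $z^{2}$, $z^{3}$ and the real/imaginary parts of the $z^{2}\bar z$ coefficient stated in $X_1$. Once this arithmetic is done, the conclusion $\varphi_j(1;z,\bar z)=F_j(z,\bar z)+O(4)$ for $j=1,2$ follows at once from the corresponding proposition.
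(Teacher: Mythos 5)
Your proposal is correct and follows essentially the same route as the paper: identify $\alpha=\pi/2$ (primitive fourth root of unity, compatibility condition \eqref{e:compatibilitatquartica} satisfied since $f_{0,2}=f_{0,3}=0$) and apply Proposition~\ref{p:propocubics-resquarta}$(a)$ for $F_1$, and identify $\alpha=\pi/3$ (non-resonant) and apply Proposition~\ref{p:propocubics-nores} for $F_2$. Your explicit coefficient computations all check out and in fact supply arithmetic the paper leaves implicit.
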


\begin{proof}
    Observe that $F_1$ has the form \eqref{e:F} with $\alpha=\pi/2$, so
    that $\mathrm{e}^{i\alpha}$ is a primitive fourth root of the unity.
    Since the compatibility condition \eqref{e:compatibilitatquartica}
    is satisfied, by using the expression in Proposition
    \ref{p:propocubics-resquarta}$(a)$ we can find an one-parameter family
    of vector fields $X_{1}(z,\bar{z},\mu)$ satisfying
    $\varphi_1(1;z,\bar{z})=F_1(z,\bar{z})+O(4).$ This is the family of vector fields
    $X_{1}$ given in the statement,
    where $\mu$ is the free parameter $a_{0,3}$. Also observe that $F_2$
    has also the form \eqref{e:F} with $\alpha=\pi/3$, so that
    $\mathrm{e}^{i\alpha}$ is a primitive sixth root of the unity. By
    using Proposition \ref{p:propocubics-nores} we can find a unique
    vector field $X_2$ satisfying
    $\varphi_2(z,\bar{z})=F_2(z,\bar{z})+O(4)$. This $X_2$ is the second vector field
    given in the statement.
\end{proof}

\begin{proof}[Proof of Theorem \ref{t:main}]  We will prove that the vector
 fields given in the statement of Proposition~\ref{p:laslasrep} provide the
 desired example with $X_1$ and $X_2$ having the origin as a singular LAS point
  and with the origin being a repeller for the 2-seasonal differential system~\eqref{e:eq1}.
   Then, the converse situation will hold  simply by considering the vector fields  $-X_1$ and $-X_2.$

The key point is to realize that if $\varphi(t;z,\bar z)$ denotes the flow of~\eqref{e:eq1} it holds that
\begin{align*}
\varphi(2;z,\bar{z})&=\varphi_2(1;\varphi_1(1;z,\bar{z}),\overline{\varphi_1(1;z,\bar{z})} )
=F_2\big(F_1(z,\bar{z}\big)+O(4))+O(4)\\&=F_2\circ F_1(z,\bar{z})+O(4).
\end{align*}
Now, a crucial step is that the first Birkhoff
stability constant $V_1(F)$ only depends on the third order jet of $F$ at the fixed point, see \cite[Equation (3)]{CGM18}.
 Hence $V_1(\varphi_j(1;z,\bar z))=V_1(F_j),$  $j=1,2$ and
 $V_1(\varphi(2;z,\bar z))=V_1(F_2\circ F_1).$

It is clear that for the vector fields $X_1$, $X_2$ and the one in \eqref{e:eq1} the stability of
the origin coincides  with the one of the corresponding flows
$\varphi_1(1;\cdot,\cdot),$  $\varphi_2(1;\cdot,\cdot)$ and
$\varphi(2;\cdot,\cdot)$ respectively. Equivalently, these stabilities coincide
with the ones of the origin for the maps $F_1,$ $F_2$ and $F_2\circ
F_1.$ Since, by Proposition~\ref{p:maps}, these maps provide a
discrete dynamic Parrondo's paradox, we  have that both $X_1$ and
$X_2$ have a LAS singular point at the origin, and the corresponding
$2$-seasonal system~\eqref{e:eq1} has a repeller point at the
origin, as we wanted to prove. ~\end{proof}

\section{Proof of Theorem \ref{p:minimal}}\label{t:2}

$(i)$ This is a corollary of statement $(i)$  of Theorem \ref{p:compatibilitat}.

$(ii)$ We will use item $(ii)$  of Theorem \ref{p:compatibilitat}.
For each $n\ge 2$ we will prove that the polynomial map
\begin{equation}\label{e:Fcontaexemple}
F(z,\bar{z})=\mathrm{e}^{i\alpha} z+\bar{z}^n,\,\,\text{with}\,\,\alpha=\frac{2\,\pi}{n+1},
\end{equation}
satisfies the statement of the theorem.

The result for $n=2$ is a direct consequence of
Proposition~\ref{p:propoquadratics}. When   $n=3,$ the result
follows by item $(a)$ of Proposition~\ref{p:propocubics-resquarta}
because  the compatibility
condition~\eqref{e:compatibilitatquartica} does not hold.

Now suppose that $n\ge 4.$  We claim that  \emph{for each $2\le m\le
n-1,$ if  $\mathrm{e}^{i\alpha}$ is a primitive $(n+1)$-root of
unity and  $X_m$ is a vector field with associated flow of the form
 $\varphi_m(t;z,\bar z)= \mathrm{e}^{i\alpha t} z+O(m+1),$ then  it satisfies $X_m(z,\bar z)= i\alpha z
+O(m+1).$}  We will prove the claim by induction on $m$, by using
the same method and notations introduced in
Section \ref{s:recu}.

By Proposition \ref{p:propoquadratics} the result is true for $m=2.$
Assume that the result is true for $m<n-1.$ As a consequence, for any vector field of the form
$$X_{m+1}(z,\bar{z})=i\alpha z+\sum_{j+k=m+1} a_{j,k}
z^j\bar{z}^k+ O(m+2),$$  its associated  flow has the form
$$\varphi_{m+1}(t;z,\bar{z})=\mathrm{e}^{i\alpha t} z+\sum_{j+k=m+1}\varphi_{j,k}(t)z^j\bar{z}^k+O(m+2). $$
By plugging the above expression into the
differential system $\dot{z}=X_{m+1}(z,\bar{z})$, we get that for $j+k=m+1$:
\begin{equation}\label{eq1exemple}
\varphi_{j,k}'(t)=i\alpha
\varphi_{j,k}(t)+a_{j,k}\mathrm{e}^{(j-k)i\alpha t},
\end{equation}
and since $\varphi_{j,k}(0)=0$ we obtain that
\begin{equation}\label{eq2exemple}
\varphi_{j,k}(t)=\begin{cases}
\dfrac{a_{j,k}}{(j-k-1)i\alpha}\,\mathrm{e}^{i\alpha t}\,\left(\mathrm{e}^{(j-k-1)i\alpha t}-1\right), &j \ne  k+1,\\
a_{j,k} t \,\mathrm{e}^{i\alpha t}, &j  =k+1.
\end{cases}
\end{equation}
Since $\mathrm{e}^{i\alpha}$ is a primitive $(n+1)$-root of unity,
 $\mathrm{e}^{(j-k-1)i\alpha}\ne 1$ for $j+k=m+1<n$ (see Remark \ref{resnivelln}). Now, if we assume that $\varphi_{m+1}(1;z,\bar{z})=\mathrm{e}^{i\alpha} z+O(m+1)$
then  $\varphi_{j,k}(1)=0$ for $j+k=m+1$ and from
(\ref{eq2exemple}) $a_{j,k}=0$ and $\varphi_{j,k}(t)\equiv0.$ So,
the claim is proved.

Now we proceed by contradiction. We consider the map \eqref{e:Fcontaexemple}. and suppose that there exists a vector field $X$ whose flow satisfies
\begin{equation}\label{e:vphiigualaF}
\varphi(1;z,\bar{z})=F(z,\bar{z})+O(n+1)=\mathrm{e}^{i\alpha} z+\bar{z}^n+O(n+1).
\end{equation}
From the claim, $X$ must have the form
$X(z,\bar{z})=i\alpha z+\sum_{j+k=
n} a_{j,k}z^j\bar{z}^k+O(n+1)$. For these kind of vector fields the associated flow must have the form
$\varphi(t;z,\bar{z}) =\mathrm{e}^{i\alpha t}+\sum_{j+k= n}
\varphi_{j,k}(t) z^j\bar{z}^k+O(n+1).$ For $j+k=n$, the
functions $\varphi_{j,k}(t)$ also satisfy (\ref{eq1exemple}) and
hence (\ref{eq2exemple}). In particular,
$$
\varphi_{0,n}(t)=
-\dfrac{1}{2\pi i}\,a_{0,n}\,\mathrm{e}^{\frac{2\pi i }{n+1}\, t}\,
\left(\mathrm{e}^{-2\pi i t}-1\right),
$$
and therefore $\varphi_{0,n}(1)=0$.  But this is in contradiction
with  Equation \eqref{e:vphiigualaF}, which implies that $\varphi_{0,n}(1)=1$.

\end{document}